\documentclass[final,3p,times]{elsarticle}
\biboptions{sort}

\usepackage{amsmath,amssymb,amsthm,mathtools,mathrsfs}
\usepackage{graphicx}
\usepackage{needspace}
\usepackage{courier}
\usepackage[hidelinks]{hyperref}
\hypersetup{pdftitle={The Morse Index, Nullity and Jacobi Fields of Constant-Curvature Minimal 2-Spheres in Complex Projective Spaces},
 pdfauthor={Hongbin Cui; Shuping Huang}}
\allowdisplaybreaks[1]

\newtheorem{thm}{Theorem}[section]
\newtheorem{lem}[thm]{Lemma}
\newtheorem{prop}[thm]{Proposition}
\newtheorem{cor}[thm]{Corollary}
\theoremstyle{remark}
\newtheorem*{remark}{Remark}
\theoremstyle{plain}

\newcommand{\R}{\mathbb{R}}
\newcommand{\C}{\mathbb{C}}
\newcommand{\CP}{\mathbb{C}P}
\newcommand{\SU}{\mathrm{SU}}

\newcommand{\Id}{\operatorname{Id}}
\newcommand{\Ind}{\operatorname{Ind}}
\newcommand{\Nul}{\operatorname{Nul}}
\newcommand{\II}{\mathrm{II}}
\newcommand{\dd}{\mathrm{d}}

\begin{document}

\begin{frontmatter}

\title{The Morse Index, Nullity and Jacobi Fields of Constant-Curvature Minimal 2-Spheres in Complex Projective Spaces}

\author[ustc]{Hongbin Cui}
\ead{cuihongbin@ustc.edu.cn}
\author[ustc]{Shuping Huang\corref{cor1}}
\cortext[cor1]{Corresponding author: Shuping Huang.}
\ead{hsp@mail.ustc.edu.cn}

\address[ustc]{School of Mathematical Sciences,
University of Science and Technology of China, 96 Jinzhai Road,
Hefei 230026, Anhui Province, China}

\begin{abstract}
We determine the Morse index and normal nullity of all constant-curvature
minimal two-spheres in complex projective spaces, identify the normal Jacobi
kernel as an $\SU(2)$-module, and prove that every normal Jacobi field is integrable.
The formulas include all totally geodesic extensions.  In a fixed
ambient space, the normal nullity is constant along each Veronese sequence.
\end{abstract}

\begin{keyword}
Morse index \sep nullity \sep minimal 2-sphere
\sep complex projective spaces \sep Veronese sequence \sep Jacobi operator
\end{keyword}

\end{frontmatter}


\section{Introduction}
\label{sec:introduction}

We study the Morse index, normal nullity and Jacobi fields of
constant-curvature minimal two-spheres in complex projective spaces.
The Morse index measures instability under normal variations, and the
normal nullity is the dimension of the normal Jacobi kernel.  A related
question is whether every normal Jacobi field arises from a family of
minimal immersions.
A minimal two-sphere in $\CP^n$ is \emph{linearly full} if its
image is contained in no proper complex projective subspace.  Eells--Wood
developed the harmonic-sequence description of harmonic maps into $\CP^n$
\cite{EellsWood1983}.  Bando--Ohnita classified the linearly full
constant-curvature examples as the members of the Veronese sequence
\cite{Bando1987}; see also the treatment by
Bolton--Jensen--Rigoli--Woodward \cite{Bolton1988} and the equivariant
classification in \cite{Ohnita1990}.  Consequently,
up to an ambient isometry and reparametrization, every
constant-curvature minimal immersion $S^2\to\CP^N$ is a totally geodesic
extension of a linearly full Veronese member.

In the Veronese sequence indexed by $k=0,\ldots,n$, the holomorphic
member ($k=0$) and the anti-holomorphic member ($k=n$) are stable as
complex curves \cite{LawsonSimons1973,Simons1968}, whereas stability
results for harmonic maps into compact Hermitian symmetric spaces imply
that all members with $0<k<n$ are unstable
\cite{BurnsDeBartolomeis1988,BurnsEtAl1989,OhnitaUdagawa1987}.
Eells--Wood obtained general lower bounds for the energy index, later
sharpened by Oliver \cite{EellsWood1983,Oliver2020}.
Kimura computed the normal nullities of the Veronese maps with $k=0$ or
$k=n$ and their totally geodesic extensions \cite{Kimura1977}.
For the $k=1$ member in $\CP^2$, Ohnita obtained the index and nullity of the
embedded real projective plane \cite{Ohnita1987}, while Montiel--Urbano treated
its double-covering immersion \cite{MontielUrbano1997}.
For the Bor\r{u}vka spheres in round spheres, Ball--Madnick computed the
index and nullity and proved the integrability of Jacobi fields
\cite{BallMadnick2026}.
The complex projective spheres also arise in the Euclidean $\CP^N$ nonlinear
sigma model, where related stability and fluctuation questions have been
studied \cite{BurstallRawnsley1987,DabrowskiDunne2013,DinZakrzewski1980,DinZakrzewski1982}.

In complex projective spaces, Kimura proved that every normal Jacobi field
along a Veronese map with $k=0$ or $k=n$ is integrable through families
of K\"ahler embeddings
\cite{Kimura1977}.
Lemaire--Wood proved that every energy Jacobi field along a harmonic map
$S^2\to\CP^2$ is integrable.  They also obtained the corresponding result for
area Jacobi fields along minimal branched immersions by taking normal
components \cite{LemaireWood2002}.
For a minimal immersion of $S^2$, with the induced conformal structure on
the domain, the energy and area indices agree.  Normal projection
identifies the energy Jacobi kernel,
modulo the six-dimensional real space of infinitesimal conformal M\"obius
reparametrizations, with the normal Jacobi kernel
\cite{EjiriMicallef2008}; see also \cite{LemaireWood2002}.
Deformations of harmonic maps and the parameter spaces of harmonic
two-spheres have been studied in
\cite{CatenacciCornalbaReina1983,Crawford1997,GuestOhnita1993,Kawabe2013,LemaireWood1996}.

We determine the exact Morse index and normal nullity for every member of
the Veronese sequence in $\CP^n$ and every totally geodesic extension to
$\CP^N$.  The normal Jacobi kernel admits an explicit $\SU(2)$-module
decomposition, and all its elements are realized by deforming the rational
normal directrix.  Here the \emph{holomorphic directrix} is the initial
holomorphic curve generating the harmonic sequence by successive Gauss
transforms.  The Jacobi operator reduces to real symmetric matrix
blocks; their inertia is determined by an explicit congruence calculation
involving tridiagonal matrices whose spectra are computed directly.

For integers $n\ge1$ and $0\le k\le n$, put $\lambda\coloneqq n-2k$ and let
$\phi_{\lambda,n}:S^2\to\CP^n$ be the corresponding linearly full member of
the Veronese sequence.  For an integer $N\ge n$, let $\phi_{\lambda,n}^{N}$ be
its composition with a totally geodesic linear inclusion
$\CP^n\hookrightarrow\CP^N$.  When $n$ is even and $k=n/2$, the index and
nullity are computed on the covering sphere $S^2$, using all normal fields
along the two-fold covering immersion \cite{Bando1987,Bolton1988}.

\begin{thm}
	\label{thm:introduction-linearly-full}
	Let $n\ge1$ be an integer, let $k\in\{0,\ldots,n\}$, and put
	$\lambda\coloneqq n-2k$. Then
	\[
	\Ind(\phi_{\lambda,n})
	=2k(n-k)(n+1)
	=\frac{(n+1)(n^2-\lambda^2)}2,\qquad
	\Nul(\phi_{\lambda,n})=2(n-1)(n+3).
	\]
In particular, $\phi_{\lambda,n}$ is stable if and only if $k=0$ or $k=n$.
\end{thm}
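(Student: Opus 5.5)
The plan is to exploit the $\SU(2)$-equivariance of $\phi_{\lambda,n}$ and reduce the normal Jacobi operator, by Schur's lemma, to finitely many real symmetric matrices whose inertia can be counted. Let $f_0,\dots,f_n$ be the harmonic sequence generated by the rational normal directrix, and let $L_j$ be the line bundles they span, so that $\phi_{\lambda,n}=[L_k]$. Then
\[
\phi^*T\CP^n\otimes\C\;\supset\;\phi^*T^{1,0}\CP^n=\bigoplus_{j\ne k}\operatorname{Hom}(L_k,L_j).
\]
The tangent plane of the immersion sits inside $\operatorname{Hom}(L_k,L_{k-1})\oplus\operatorname{Hom}(L_k,L_{k+1})$. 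A real normal field is therefore encoded by two kinds of components:
\begin{itemize}
\item sections $\xi_j$ of $\operatorname{Hom}(L_k,L_j)$ for $|j-k|\ge2$;
\item one complex section of a degree-$2$ homogeneous line bundle, namely the orthogonal complement of the tangent plane in the $j=k\pm1$ piece.
\end{itemize}
Each of these is an $\SU(2)$-homogeneous line bundle of explicit degree (the degree of $\operatorname{Hom}(L_k,L_j)$ is $2(j-k)$, up to the normalization of the Veronese metric). By Frobenius reciprocity its sections decompose as $\bigoplus_{m\ge m_0}V_m$ with multiplicity one, where $V_m$ is the irreducible module of dimension $2m+1$ and $m_0$ is half the absolute degree. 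I would compute everything concretely via spin-weighted spherical harmonics.

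The first real step is to write the Jacobi operator $J=\Delta^\perp-\mathcal R-\mathcal A$ in this frame, using the Frenet-type equations $\partial f_j=f_{j+1}+\ldots$ and the explicit $|f_j|^2$ ratios of the Veronese sequence. The rough Laplacian on each $\operatorname{Hom}(L_k,L_j)$ becomes $\bar\partial^*\bar\partial$ plus a constant. The curvature term of $\CP^n$ and the Simons term $\mathcal A$ (built from the second fundamental form, which pairs $\operatorname{Hom}(L_k,L_{k\pm1})$ with $\operatorname{Hom}(L_k,L_{k\pm2})$ and their conjugates) couple a component $\xi_j$ only to $\xi_{j\pm2}$ and to the complex conjugates of the mirror components $\xi_{2k-j}$. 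Since $J$ commutes with $\SU(2)$, for each $m$ it acts on the finite-dimensional multiplicity space $\operatorname{Hom}_{\SU(2)}(V_m,\Gamma(\nu))$ by a real symmetric matrix $M_m$. After splitting by parity of $j-k$, this matrix is tridiagonal in the index $j$. Then
\[
\Ind=\sum_m(2m+1)\,n_-(M_m),\qquad \Nul=\sum_m(2m+1)\,n_0(M_m).
\]

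For the nullity I would first produce enough Jacobi fields and then show that nothing else lies in the kernel. Deforming the directrix by $\mathrm{PGL}(n+1,\C)$ and taking the $k$-th Gauss transform gives constant-curvature minimal spheres, hence integrable Jacobi fields. The infinitesimal action of $\mathfrak{sl}(n+1,\C)$ has real dimension $2n(n+2)$. Its kernel modulo reparametrization is the complexified stabilizer $\mathfrak{sl}(2,\C)$, of real dimension $6$, and modding out leaves $2n(n+2)-6=2(n-1)(n+3)$. As $\SU(2)$-modules this quotient is $\mathfrak{sl}(n+1,\C)/\mathfrak{sl}(2,\C)\cong\bigoplus_{m=2}^{n}V_m$, taken as a complex module. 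This predicts exactly which $M_m$ have a one-dimensional complex kernel, namely those with $2\le m\le n$. Showing that these normal projections are nonzero and linearly independent gives the lower bound on $\Nul$.

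The main obstacle is the inertia count: showing that $M_m$ has exactly the predicted kernel and that its negative eigenvalues total $2k(n-k)(n+1)$. The factor $n+1=\dim V_{n/2}$ (or the corresponding sum over $m$ when $n$ is odd) suggests that the negative directions are concentrated in low $m$. Two sources seem relevant: the projections of Killing fields of $\mathfrak{su}(n+1)$, and the holomorphic sections of $\operatorname{Hom}(L_k,L_j)$ for $j<k$, which are known to be destabilizing in the Burns--de Bartolomeis mechanism. I would attack the inertia by an explicit $LDL^{T}$ congruence of each tridiagonal $M_m$, taking the pivots as ratios of consecutive leading minors. For tridiagonal matrices of this Veronese type the minors should factor into products of linear terms of the form $(m-a)(m+a+1)$, which makes the signs readable. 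Summing the negative counts over $m$ should give $2k(n-k)(n+1)$. This vanishes exactly when $k=0$ or $k=n$, which yields the stability statement. The case $k=n/2$ is treated on the covering sphere with all normal fields and needs no change in the argument.
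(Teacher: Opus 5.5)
\textbf{There is a genuine gap.} Your framework matches the paper's: a Schur reduction of $J$ to finite real symmetric blocks on isotypic components (the paper's $M_\ell$ on copies of $V_{2\ell}$), followed by an inertia count. However, your description of those blocks is wrong, and the inertia count, which is the step that carries the theorem, is only conjectured.

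The splitting $\bigoplus_{j\ne k}\operatorname{Hom}(L_k,L_j)$ is not parallel for $\nabla^\perp$. The Frenet equations $\dd Z_\mu=\mu\mathrm{i}\omega Z_\mu+a_{\mu,n}\varphi Z_{\mu-2}-b_{\mu,n}\overline\varphi Z_{\mu+2}$ send the $j$th component into the $(j\pm1)$st summands. So $J$ is not ``$\bar\partial^*\bar\partial$ plus constants plus zeroth-order couplings''. It contains the first-order terms $(CD_A+C^*D_B)/D_{\lambda,n}$, which produce:
\begin{itemize}
\item the adjacent entries $a_{\lambda-2r,n}\sqrt{\ell(\ell+1)-r(r+1)}/D_{\lambda,n}$ along each branch;
\item the couplings of the central coordinate $A_1$ to $A_2^\pm$.
\end{itemize}
Hence there is no splitting by the parity of $j-k$.

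Also, $\mathcal R$ is diagonal in these coordinates. It is $\mathcal B$ together with the $CC^*$ term that couples $A_2^-$ directly to $A_2^+$, giving the entry $-\alpha\beta/D_{\lambda,n}^2$. For $2\le k\le n-2$ and $\ell\ge2$ the graph of $M_\ell$ therefore contains the triangle $A_1,A_2^-,A_2^+$, so $M_\ell$ is not tridiagonal in any ordering. Your $LDL^{\mathrm T}$ plan, based on a three-term recurrence for leading minors, is set up on the wrong object. Even for the correct matrix you give no reason the minors should factor, and that factorization is exactly the content of the index and nullity formulas.

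\textbf{The missing idea is a comparison matrix with known spectrum.} The paper uses the tridiagonal matrix $T_\ell$ of order $m_\ell+2$. It is the shifted Casimir $\tfrac14(H^2+2AB+2BA-n(n+2)\Id)$ on the weight-$\lambda$ subspace of $V_n\otimes V_{2\ell}$. Its eigenvalues are the simple values $(\ell-j)(n+\ell-j+1)$, $0\le j\le m_\ell+1$.

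Completing the square in the central coordinates $x_0,x_{\pm1}$ gives an explicit congruence
\[
P^{\mathrm T}T_\ell P=\operatorname{diag}\bigl(\ell(\ell+1),\,\ell(\ell+1)-2,\,-D_{\lambda,n}M_\ell\bigr).
\]
This uses an orthogonal change of $(x_{-1},x_1)$ followed by two triangular substitutions. The square completion is exactly what generates the $r=2$ diagonal corrections and the cross-branch entry, so the triangle is absorbed.

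Sylvester's law then gives the exact inertia, in the convention where positive eigenvalues count toward the index:
\begin{itemize}
\item for $2\le\ell\le n$, $M_\ell$ has $m_\ell+1-\ell$ positive eigenvalues, one zero eigenvalue and $\ell-2$ negative ones;
\item for $\ell>n$, $M_\ell$ is negative definite;
\item $M_1=(1)$ when $0<k<n$.
\end{itemize}
Summing gives both formulas. This includes the upper bound on the kernel, which your directrix-deformation argument cannot supply; that argument only gives a lower bound.

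Two smaller corrections:
\begin{itemize}
\item Gauss transforms of a $\mathrm{PGL}(n+1,\C)$-deformed directrix are minimal but in general not of constant curvature. Only minimality is needed for their velocities to be Jacobi fields.
\item Normal projections of Killing fields are Jacobi fields, so they cannot be a source of negative directions.
\end{itemize}
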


\begin{thm}
	\label{thm:introduction-higher-codimension}
	Let $N\ge n\ge1$ be integers, let $k\in\{0,\ldots,n\}$, and put
	$\lambda\coloneqq n-2k$. Then
	\[
	\Ind_{\CP^N}(\phi_{\lambda,n}^{N})
	=2k(n-k)(N+1)
	=\frac{(N+1)(n^2-\lambda^2)}2,\qquad
	\Nul_{\CP^N}(\phi_{\lambda,n}^{N})
	=2\bigl((N+1)(n+1)-4\bigr).
	\]
\end{thm}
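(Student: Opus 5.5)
The plan is to decompose the normal bundle of $\phi_{\lambda,n}^N$ orthogonally. Write $\C^{N+1}=\C^{n+1}\oplus\C^{N-n}$. The pullback of $T\CP^N$ along $\phi=\phi_{\lambda,n}^N$ is $\mathrm{Hom}(L,L^\perp)$, where $L$ is the tautological line bundle. Hence the normal bundle splits as
\[
\nu\bigl(\phi_{\lambda,n}^N\bigr)=\nu\bigl(\phi_{\lambda,n}\bigr)\oplus \mathrm{Hom}\bigl(L,\underline{\C}^{N-n}\bigr)=\nu\bigl(\phi_{\lambda,n}\bigr)\oplus (L^*)^{\oplus(N-n)}.
\]
The second summand is normal to $\CP^n$. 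Because $\CP^n\subset\CP^N$ is totally geodesic, the Jacobi operator preserves this splitting. On each copy of $L^*$ it acts as the twisted Laplacian, i.e. the rough Laplacian of $L^*$ with the induced connection, minus the curvature term $\sum_i R(e_i,\xi)e_i$. For a direction normal to $\CP^n$ this curvature term equals $2\,\mathrm{Id}$ in the normalization with holomorphic sectional curvature $4$, since $R(X,\xi)X=|X|^2\xi$ there, and the complex directions $X,JX$ split off as well. It follows that
\[
\Ind_{\CP^N}(\phi_{\lambda,n}^N)=\Ind(\phi_{\lambda,n})+(N-n)\,i_0,\qquad \Nul_{\CP^N}(\phi_{\lambda,n}^N)=\Nul(\phi_{\lambda,n})+(N-n)\,z_0,
\]
where $i_0$ and $z_0$ are the numbers of negative and zero eigenvalues of $J_0=\nabla^*\nabla-2$ acting on real sections of the complex line bundle $L^*$. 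Granting Theorem~\ref{thm:introduction-linearly-full}, it remains to show $i_0=2k(n-k)$ and $z_0=2(n+1)$. Indeed,
\[
2k(n-k)(n+1)+(N-n)\,2k(n-k)=2k(n-k)(N+1),
\]
and
\[
2(n-1)(n+3)+2(N-n)(n+1)=2\bigl((N+1)(n+1)-4\bigr),
\]
where the second identity uses $(n-1)(n+3)=(n+1)^2-4$.

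To compute the spectrum of $J_0$, I would use $\SU(2)$-equivariance. The map $\phi_{\lambda,n}$ is $\SU(2)$-equivariant with respect to the irreducible representation $V_n$ on $\C^{n+1}$. The line bundle $L$ is the homogeneous bundle $\SU(2)\times_{U(1)}\C_{\lambda}$, the weight-$\lambda$ line in $V_n$; here the sign of $\lambda$ is a convention to be fixed. The induced metric has constant curvature $4/(n+2k(n-k))$, with $n+2k(n-k)=(n^2-\lambda^2)/2+n$. The connection on $L$ is the homogeneous one. By Frobenius reciprocity,
\[
\Gamma(L^*)=\bigoplus_{m\ge|\lambda|,\ m\equiv\lambda\ (2)} V_m .
\]
The rough Laplacian acts on $V_m$ by a Casimir eigenvalue proportional to $m(m+2)-\lambda^2$. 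The proportionality constant is $r^{-2}/4$ in suitable units, where $r^{2}=(n+2k(n-k))/4$ is the squared radius of the sphere. So $J_0$ acts on $V_m$ by
\[
\frac{m(m+2)-\lambda^2}{n+2k(n-k)}-2 .
\]
This is negative, zero or positive according as $m(m+2)$ is smaller than, equal to or larger than $\lambda^2+2n+(n^2-\lambda^2)=n(n+2)$, that is, according as $m<n$, $m=n$ or $m>n$.

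Zero therefore occurs exactly on $V_n$, which has complex dimension $n+1$, so $z_0=2(n+1)$. These are the infinitesimal motions pushing $\CP^n$ into $\C^{N-n}$ directions. Negative eigenvalues occur on $V_m$ for $|\lambda|\le m\le n-2$ with $m\equiv n$ mod $2$. Their real dimension is
\[
i_0=2\sum_{j=1}^{k'}\bigl(|\lambda|+2j-1\bigr),
\]
where $k'=\min(k,n-k)$ and the sum runs over $m=|\lambda|+2j-2$. This equals $2\bigl(k'|\lambda|+k'^2\bigr)=2k'(n-k')=2k(n-k)$, as needed.

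The main obstacle is pinning down the constants correctly: the curvature normalization, the eigenvalue formula for the twisted Laplacian, and the exact weight of $L$ under the homogeneous identification. If these are off by a shift, the threshold $m=n$ moves. I would check the normalization against Kimura's holomorphic case $k=0$. There $J_0$ should have kernel $V_n$, the holomorphic sections of $\mathcal O(n)$, coming from projective motions, and no negative part.
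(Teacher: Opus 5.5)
Your proposal is correct and follows the paper's proof essentially step by step. The paper likewise splits the normal bundle as $N_{\phi_{\lambda,n}}S^2\oplus(L_{-\lambda/2})^{\oplus(N-n)}$, where $L_{-\lambda/2}$ is your $L^*$. It shows $J^N=J\oplus(\Delta+\tfrac12\Id)^{\oplus(N-n)}$ using $\mathcal B=0$ and $\mathcal R=\tfrac12\Id$ on the extra directions, and decomposes $L^2(L_{-\lambda/2})$ into summands $V_s$ with $s\ge|\lambda|$ and $s\equiv n\pmod 2$. The resulting eigenvalue $(n-s)(n+s+2)/(4D_{\lambda,n})$ is exactly $-\tfrac14$ times yours, which reflects your change from holomorphic sectional curvature $1$ to $4$ and the opposite sign convention for $J$. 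Your constants, the threshold $m=n$, and the counts $i_0=2k(n-k)$ and $z_0=2(n+1)$ all check out, so the normalization concern you flag does not arise.
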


The formulas recover Kimura's nullities for $k=0$ or $k=n$
\cite{Kimura1977} and Montiel--Urbano's index and nullity for the
double-covering Veronese sphere with $k=1$ in $\CP^2$ \cite{MontielUrbano1997}.

For fixed $n$ and $N$, the normal nullity is independent of $k$, although
the index generally varies along the Veronese sequence.  The next theorem
addresses the integrability of normal Jacobi fields.

\begin{thm}
\label{thm:introduction-integrable-nullity}
Let $N\ge n\ge1$ be integers, let $k\in\{0,\ldots,n\}$, and put
$\lambda\coloneqq n-2k$.  Every normal Jacobi field $V$ along
$\phi_{\lambda,n}^{N}$ is integrable: there is a smooth family of minimal
immersions $\psi_t:S^2\to\CP^N$, obtained by deforming the rational normal
directrix, such that
\[
 \psi_0=\phi_{\lambda,n}^{N},\qquad
 \left(\left.\frac{\partial\psi_t}{\partial t}\right|_{t=0}\right)^\perp=V.
\]
\end{thm}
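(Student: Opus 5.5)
The plan is a dimension count: build an explicit smooth family of minimal immersions through $\phi_{\lambda,n}^{N}$, show that its normal variation fields span a space of dimension $\Nul_{\CP^N}(\phi_{\lambda,n}^{N})=2\bigl((N+1)(n+1)-4\bigr)$, and conclude from Theorem~\ref{thm:introduction-higher-codimension}. The family comes from the harmonic-sequence description of Eells--Wood. Let $f_0:S^2\to\C^{N+1}\setminus\{0\}$ be a holomorphic lift of the rational normal curve of degree $n$, so $f_0(z)=\bigl(\sqrt{\binom{n}{j}}z^j\bigr)_{j=0}^{n}$ padded by zeros. The member $\phi_{\lambda,n}^{N}$ is the $k$-th Gauss transform $[\partial^{(k)}f_0]$, the projection of the $k$-th derivative of $f_0$ orthogonal to the span of the lower derivatives. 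For any linearly full holomorphic curve $g:S^2\to\CP^{m}$ of degree $n$, the $k$-th Gauss transform is harmonic and conformal, hence a branched minimal immersion. Since being an immersion is an open condition on the compact sphere, nearby $g$ give genuine minimal immersions.

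\emph{Step 1 (the deformation space).} Take $g=A\cdot f_0$, where $A\in\mathrm{Hom}(\C^{n+1},\C^{N+1})$ has rank $n+1$ and is near the inclusion. The resulting curves are the rational normal curves spanning some $\CP^n\subset\CP^N$. The map $A\mapsto [\partial^{(k)}(Af_0)]$ is equivariant under $GL(N+1,\C)$ acting on the left, and its fibres contain scalars $\C^*$ and the reparametrizations by $PSL(2,\C)$ acting on the right through the irreducible representation on $\C^{n+1}$. So the parameter space has real dimension $2(N+1)(n+1)-2$. After removing the six-dimensional Möbius group, the expected dimension is $2(N+1)(n+1)-8=\Nul$, which is exactly the nullity formula. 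This numerical match is the heuristic that drives the whole proof.

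\emph{Step 2 (the infinitesimal map).} For $B\in\mathrm{Hom}(\C^{n+1},\C^{N+1})$, I would define $V_B=\bigl(\tfrac{d}{dt}[\partial^{(k)}((\iota+tB)f_0)]|_{t=0}\bigr)^\perp$. Each $V_B$ is a normal Jacobi field, because it is the normal component of a variation through minimal immersions. The map $B\mapsto V_B$ is real-linear and $\SU(2)$-equivariant. I would decompose $\mathrm{Hom}(\C^{n+1},\C^{N+1})$ into irreducibles, using $\C^{N+1}=\C^{n+1}\oplus\C^{N-n}$. The first block gives $\C^{n+1}\otimes\overline{\C^{n+1}}\cong\bigoplus_{j=0}^{n}V_{2j}$, and the second gives $N-n$ copies of $V_n$, with the appropriate real structures. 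The task is then to identify the kernel of this map.

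\emph{Step 3 (the kernel, the main obstacle).} The kernel must contain the tangential and trivial directions: $B=c\,\iota$ with $c\in\C$, and $B=\iota\circ X$ with $X\in\mathfrak{sl}(2,\C)$ acting on $\C^{n+1}$. Together these have real dimension $8$, lying inside $V_0\oplus V_2$ of the first block. I must show the kernel is no larger. Once that is done, the image has dimension $2(N+1)(n+1)-8=\Nul$, it sits inside the normal Jacobi kernel, and the two therefore coincide. This gives the theorem with $\psi_t=[\partial^{(k)}((\iota+tB)f_0)]$.

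To prove injectivity modulo the $8$-dimensional subspace, I would use equivariance and Schur's lemma. It suffices to test one highest-weight vector in each isotypic component, evaluated at the point $z=0$. On a component of $V_{2j}$ with $j\ge2$, or on the $V_n$ components in the normal $\C^{N-n}$ directions, the first-order change of the $k$-th osculating line should have a nonzero normal part, coming from a monomial of degree outside $\{k-1,k,k+1\}$. For the remaining $V_0$ and $V_2$ pieces beyond $\C\oplus\mathfrak{sl}(2,\C)$, such as the real parts differing by $i$, I would check directly with a weight computation. The delicate cases are small $n$ and $k=n/2$, where the weights involved can coincide.

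Alternatively, I could compare the image with the explicit $\SU(2)$-module decomposition of the normal Jacobi kernel obtained in proving Theorem~\ref{thm:introduction-higher-codimension}. This would show that every irreducible summand of that kernel is hit, again checked on a highest-weight vector at $z=0$.
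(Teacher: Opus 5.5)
Your architecture matches the paper's: deform the rational normal directrix by linear maps near the inclusion, take the $k$-th Gauss transform, note that normal parts of the variation fields are Jacobi fields, and compare $2(N+1)(n+1)-8$ with $\Nul$. \emph{The gap is Step~3.} Everything rests on the bound $\dim_\R\ker(B\mapsto V_B)\le 8$, and you do not prove it. Moreover, the test you propose for it fails as stated, for two reasons.

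First, the point $z=0$ is fixed by the torus. An extremal weight vector can therefore have nonzero image there only if its weight occurs in the isotropy representation on the normal space, and often it does not.
\begin{itemize}
\item Take $0<k<n$, $\alpha>n$ and $B(v)=v_0e_\alpha$, which is an extremal weight vector of a $V_n$ summand in the new directions. The deformed curve is $f_0+te_\alpha$, and its derivatives of order $\ge1$ are unchanged. At $z=0$ the vector $e_k$ stays orthogonal to $\operatorname{span}(e_0+te_\alpha,e_1,\ldots,e_{k-1})$. So the $k$-th Gauss transform at $0$ is $[e_k]$ for all $t$, and $V_B(0)=0$. This happens even though the perturbation is a monomial of degree $0\notin\{k-1,k,k+1\}$ when $k\ge2$, contrary to your heuristic.
\item For $n=2$, $k=1$, the extremal vector $B(v)=v_0e_2$ of the $V_4$ summand of $\operatorname{End}(\C^3)$ gives the curve $(1,\sqrt2z,z^2+t)$. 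Its first Gauss transform at $0$ is $[e_1]$ for every $t$.
\end{itemize}

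Second, $B\mapsto V_B$ is only real-linear, because the Gauss transforms use the Hermitian product. For the same reason the family is only $\mathrm{U}(N+1)$-equivariant, not $\mathrm{GL}(N+1,\C)$-equivariant. Consequently each complex $V_{2j}$ is a real isotypic component of multiplicity two, and the $V_n$ summands occur with multiplicity $N-n$. One test vector with nonzero image cannot exclude a kernel such as a real form, and a real form contains no weight vector of nonzero weight. Note also that $\C\iota\oplus\iota\circ\dd\rho_n(\mathfrak{sl}(2,\C))$ is already the whole $V_0\oplus V_2$ part of $\operatorname{End}(\C^{n+1})$, so there are no ``remaining'' pieces to check there.

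\emph{The missing idea}, which is how the paper argues, is to factor $B\mapsto V_B$ through the full variation field $U_B=\frac{\dd}{\dd t}\big|_0[\partial^{(k)}((\iota+tB)f_0)]$ and bound each kernel separately, with no weight bookkeeping.

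First, $U_B=0$ only for $B\in\C\iota$. Near $\iota$, the transform $\partial''$ inverts $\partial'$ along the deformed harmonic sequence, and its local formula uses only first derivatives. Hence a vanishing variation field propagates back through $k$ steps to the directrix, giving $B(f_0(z))=a(z)f_0(z)$. Coordinates $0$ and $n$ show that $a$ and $z^na$ are both polynomials of degree at most $n$. So $a=c$ is constant, and $B=c\iota$.

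Second, suppose $U_B$ is tangential, and write $U_B=\dd\psi_0(X)$. Every $\psi_t$ is conformal for the fixed complex structure, so $\mathcal L_Xg_0$ is a multiple of $g_0$. Thus $X$ lies in the six-dimensional space of conformal vector fields, and $U\mapsto X$ is injective.

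Together these give $\dim_\R\ker(B\mapsto V_B)\le 2+6$. The image therefore has dimension at least $2(N+1)(n+1)-8=\Nul$. Since it is contained in $\ker J^N$, it equals $\ker J^N$.
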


The paper is organized as follows.
Section~\ref{sec:preliminaries} records the representation and geometric
preliminaries.  Section~\ref{sec:Jacobi-operator} derives the matrix blocks
of the Jacobi operator in $\CP^n$.  Section~\ref{sec:index} computes the
index, nullity and normal Jacobi kernel in all codimensions and proves
integrability.
The proofs of Proposition~\ref{prop:Jacobi-matrices} and
Lemmas~\ref{lem:comparison-spectrum} and~\ref{lem:matrix-congruence}
are given in \ref{app:matrix-calculations}.

\section{Preliminaries}
\label{sec:preliminaries}

In this section, we recall some standard facts on the unitary representations
of $\SU(2)$, the geometry of the Veronese sequence and the second variation of area.

Let
\[
 \SU(2)\coloneqq\left\{g=\begin{pmatrix}\zeta&\eta\\-\bar\eta&\bar\zeta\end{pmatrix}:
 |\zeta|^2+|\eta|^2=1\right\},\qquad
 T\coloneqq\{t_u\coloneqq\operatorname{diag}(e^{\mathrm{i}u},e^{-\mathrm{i}u}):u\in\R\}.
\]
The Lie algebra $\mathfrak{su}(2)$ has the real basis
\[
 \varepsilon_1\coloneqq\begin{pmatrix}\mathrm{i}&0\\0&-\mathrm{i}\end{pmatrix},\qquad
 \varepsilon_2\coloneqq\begin{pmatrix}0&1\\-1&0\end{pmatrix},\qquad
 \varepsilon_3\coloneqq\begin{pmatrix}0&\mathrm{i}\\\mathrm{i}&0\end{pmatrix}.
\]
In particular, $t_u=\exp(u\varepsilon_1)$.
The Maurer--Cartan form and its structure equations are
\begin{equation}
\label{eq:Maurer-Cartan}
 \dd g\,g^{-1}=\begin{pmatrix}\mathrm{i}\omega&\varphi\\
 -\overline\varphi&-\mathrm{i}\omega\end{pmatrix},\qquad
 \dd(\mathrm{i}\omega)=-\varphi\wedge\overline\varphi,\qquad
 \dd\varphi=2\mathrm{i}\omega\wedge\varphi.
\end{equation}
Here $\omega$ is real and $\varphi=\omega_1+\mathrm{i}\omega_2$, so
$\dd g\,g^{-1}=\varepsilon_1\omega+\varepsilon_2\omega_1+\varepsilon_3\omega_2$.
The quotient $\SU(2)/T\coloneqq\{[g]=Tg:g\in\SU(2)\}$ is identified
with $\CP^1$ by $[g]\mapsto[\zeta:\eta]$, and hence with $S^2$.
We use the standard complex structure of $\CP^1$ and the right action
$[g]\cdot h\coloneqq[gh]$.  The metric
$\varphi\overline\varphi=\omega_1^2+\omega_2^2$ on $S^2$ has Gauss
curvature $4$, by \eqref{eq:Maurer-Cartan}.

Let $V_n$ be the space of homogeneous polynomials of degree $n$ in
$z_0,z_1$, with Hermitian inner product
\[
 \left\langle\sum_{k=0}^n c_kz_0^kz_1^{n-k},
 \sum_{k=0}^n d_kz_0^kz_1^{n-k}\right\rangle_{\C}
 \coloneqq\sum_{k=0}^n c_k\overline{d_k}\,k!(n-k)!.
\]
For $f\in V_n$, we take the right polynomial action
\begin{equation}
\label{eq:SU2-right-action}
 (f\rho_n(g))(z_0,z_1)\coloneqq f\bigl((z_0,z_1)g^{-1}\bigr)
 =f(\bar\zeta z_0+\bar\eta z_1,-\eta z_0+\zeta z_1)
\end{equation}
and the orthonormal weight basis
\begin{equation}
\label{eq:unitary-weight-basis}
 v_{n-2k,n}\coloneqq\frac{(-1)^kz_0^kz_1^{n-k}}{\sqrt{k!(n-k)!}},
 \qquad 0\le k\le n.
\end{equation}
Thus $V_n$ has complex dimension $n+1$ and weights
$n,n-2,\ldots,-n$, each with multiplicity one.

For $X\in\mathfrak{su}(2)$, put
\[
 f\,\dd\rho_n(X)\coloneqq\left.\frac{\dd}{\dd t}\right|_0 f\rho_n(\exp(tX)).
\]
For $\lambda=n-2k$, set
\[
 a_{\lambda,n}\coloneqq\tfrac12\sqrt{(n+1)^2-(\lambda-1)^2},\qquad
 b_{\lambda,n}\coloneqq\tfrac12\sqrt{(n+1)^2-(\lambda+1)^2}.
\]
A coefficient is set to zero if its source or target weight is absent;
terms involving absent weight vectors are omitted.
Direct differentiation gives the action of the real basis:
\[
 \begin{aligned}
 v_{\lambda,n}\,\dd\rho_n(\varepsilon_1)&=\mathrm{i}\lambda v_{\lambda,n},\\
 v_{\lambda,n}\,\dd\rho_n(\varepsilon_2)&=a_{\lambda,n}v_{\lambda-2,n}-b_{\lambda,n}v_{\lambda+2,n},\\
 v_{\lambda,n}\,\dd\rho_n(\varepsilon_3)&=\mathrm{i}\bigl(a_{\lambda,n}v_{\lambda-2,n}+b_{\lambda,n}v_{\lambda+2,n}\bigr).
 \end{aligned}
\]
Extend this action complex-linearly to $\mathfrak{sl}(2,\C)$, and put
\[
 H\coloneqq-\mathrm{i}\varepsilon_1=\begin{pmatrix}1&0\\0&-1\end{pmatrix},\qquad
 A\coloneqq\tfrac12(\varepsilon_2-\mathrm{i}\varepsilon_3)=\begin{pmatrix}0&1\\0&0\end{pmatrix},\qquad
 B\coloneqq-\tfrac12(\varepsilon_2+\mathrm{i}\varepsilon_3)=\begin{pmatrix}0&0\\1&0\end{pmatrix}.
\]
The same letters $H,A,B$ denote their represented matrices on $V_n$.
The formulas for $\varepsilon_i$ give
\begin{equation}
\label{eq:ladder-actions-section2}
 v_{\lambda,n}H=\lambda v_{\lambda,n},\qquad
 v_{\lambda,n}A=a_{\lambda,n}v_{\lambda-2,n},\qquad
 v_{\lambda,n}B=b_{\lambda,n}v_{\lambda+2,n}.
\end{equation}
Thus $a_{\lambda,n}^2=(k+1)(n-k)$,
$b_{\lambda,n}^2=k(n-k+1)$, and
$a_{\lambda+2,n}=b_{\lambda,n}$.
The represented operators satisfy
\[
 [H,A]=2A,\qquad [H,B]=-2B,\qquad [A,B]=H.
\]
The represented Maurer--Cartan equation is
\begin{equation}
\label{eq:represented-Maurer-Cartan}
 \dd\rho_n(g)\,\rho_n(g)^{-1}
 =\mathrm{i}\omega H+\varphi A-\overline\varphi B.
\end{equation}

For $\mu=n,n-2,\ldots,-n$, set $Z_\mu(g)\coloneqq v_{\mu,n}\rho_n(g)$.
These vectors define a local moving frame after choosing a local
section of the projection $\SU(2)\to S^2$.
The ladder identities \eqref{eq:ladder-actions-section2} and
\eqref{eq:represented-Maurer-Cartan} give
\begin{equation}
\label{eq:dZ-mu}
 \dd Z_\mu=\mu\mathrm{i}\omega Z_\mu
       +a_{\mu,n}\varphi Z_{\mu-2}
       -b_{\mu,n}\overline\varphi Z_{\mu+2}.
\end{equation}

The representations $V_s$, $s\ge0$, are the irreducible unitary
representations of $\SU(2)$.  Every finite-dimensional unitary
representation is an orthogonal sum of these representations
\cite{BrockerTomDieck1985}.

Using the orthonormal weight basis, identify $V_n$ with $\C^{n+1}$ and
write $[v]$ for the complex line spanned by a nonzero vector $v$.
For $n\ge1$ and $0\le k\le n$, set $\lambda\coloneqq n-2k$.
Since $v_{\lambda,n}\rho_n(t_u)=e^{\mathrm{i}\lambda u}v_{\lambda,n}$,
the projective orbit
\[
 \phi_{\lambda,n}:S^2\simeq\SU(2)/T\longrightarrow\CP^n,\qquad
 [g]\longmapsto[v_{\lambda,n}\rho_n(g)]
\]
is well defined.  These maps form the Veronese harmonic sequence and are
linearly full conformal minimal immersions.  Write
\[
 D_{\lambda,n}\coloneqq\frac{n(n+2)-\lambda^2}{2}=n+2k(n-k).
\]
The ladder coefficients satisfy
\[
 a_{\lambda,n}^2+b_{\lambda,n}^2=D_{\lambda,n},\qquad
 a_{\lambda,n}^2-b_{\lambda,n}^2=\lambda.
\]
Using the local unit lift $Z_\lambda$, we represent tangent vectors
to $\CP^n$ along $\phi_{\lambda,n}$ by vectors $X\in\C^{n+1}$
satisfying $\langle X,Z_\lambda\rangle_\C=0$.
For the Fubini--Study metric of holomorphic sectional curvature $1$,
the metric on these representatives is
\[
 g_{\mathrm{FS}}(X,Y)=4\operatorname{Re}\langle X,Y\rangle_\C,
\]
and the ambient complex structure is multiplication by $\mathrm{i}$.
Unless a subscript $\C$ is shown, inner products are taken with respect
to this real metric. The induced metric and Gauss curvature are
\begin{equation}
\label{eq:classical-model-metric-curvature}
 \phi_{\lambda,n}^*g_{\mathrm{FS}}=4D_{\lambda,n}\varphi\overline\varphi,
 \qquad K=\frac1{D_{\lambda,n}}=\frac1{n+2k(n-k)}.
\end{equation}
For $k=0$, $\phi_{n,n}$ is holomorphic; for $k=n$, $\phi_{-n,n}$ is
anti-holomorphic.

\begin{thm}[Bando--Ohnita \cite{Bando1987}; Bolton--Jensen--Rigoli--Woodward \cite{Bolton1988}]
\label{thm:BO-classification}
Let $\psi:S^2\to\CP^n$ be a linearly full minimal immersion, with $S^2$
endowed with the induced conformal structure.  If its
induced metric has constant Gauss curvature, then, up to an ambient
isometry and a conformal or anti-conformal reparametrization, it is
$\phi_{n-2k,n}$ for some $0\le k\le n$.
Its Gauss curvature is $1/(n+2k(n-k))$.
\end{thm}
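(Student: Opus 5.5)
The plan is to use the Eells--Wood harmonic sequence \cite{EellsWood1983}. After replacing $\psi$ by $\bar\psi$ if needed (an anti-conformal reparametrization), a linearly full minimal immersion $\psi:S^2\to\CP^n$ is the $k$-th member of a unique harmonic sequence $f_0,\dots,f_n$. Here $f_0$ is a linearly full holomorphic curve (the directrix), $f_n$ is anti-holomorphic, and $f_{j+1}$ is the $\partial$-Gauss transform of $f_j$.

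In a local complex coordinate $z$, write $\gamma_j=|f_{j+1}|^2/|f_j|^2$ for $0\le j\le n-1$, and set $\gamma_{-1}=\gamma_n=0$. These satisfy the unintegrated Toda equations
\[
 \partial\bar\partial\log\gamma_j=\gamma_{j+1}-2\gamma_j+\gamma_{j-1}.
\]
The induced metric of $f_k$ is proportional to $(\gamma_{k-1}+\gamma_k)|\dd z|^2$. Its Gauss curvature is therefore expressed through these quantities.

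The key step, following Bolton--Jensen--Rigoli--Woodward \cite{Bolton1988}, is to show that constant curvature $K$ of $f_k$ forces every ratio $\gamma_j/(\gamma_{k-1}+\gamma_k)$ to be constant.
\begin{itemize}
\item First, write the curvature condition as an equation for $\log(\gamma_{k-1}+\gamma_k)$. Combine it with the Toda system and with the holomorphic differentials that exist on $S^2$; these are forced to vanish because $S^2$ has no nonzero holomorphic differentials.
\item Next, propagate constancy up and down the sequence by an induction on $|j-k|$. The boundary conditions $\gamma_{-1}=\gamma_n=0$ pin down the starting values.
\item Finally, use compactness of $S^2$: apply a maximum principle to the ratio functions, which are smooth away from the isolated zeros of the $\gamma_j$, to exclude nonconstant solutions.
\end{itemize}
Once all $\gamma_j$ are constant multiples of one metric of constant curvature, the Toda equations become the algebraic system $\gamma_{j+1}-2\gamma_j+\gamma_{j-1}=-c\gamma_j$. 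This system, together with the boundary conditions, yields $\gamma_j\propto(j+1)(n-j)$. These are exactly the values $a_{\lambda,n}^2$ computed above for the Veronese sequence.

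It follows that the directrix $f_0$ is a holomorphic curve with constant curvature $1/n$. This matches the $k=0$ case of \eqref{eq:classical-model-metric-curvature}, since $D_{n,n}=n$. By Calabi rigidity, $f_0$ is unitarily congruent, up to a conformal reparametrization of $S^2$, to the rational normal curve $\phi_{n,n}$. Applying $k$ Gauss transforms commutes with the isometry and the reparametrization. Hence $\psi=f_k$ is congruent to the orbit $[v_{n-2k,n}\rho_n(g)]$, that is, to $\phi_{n-2k,n}$. The curvature value $1/(n+2k(n-k))$ then follows from \eqref{eq:classical-model-metric-curvature}.

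I expect the main obstacle to be the propagation step: showing that constant curvature of the single member $f_k$ forces constancy of all $\gamma_j$. The curvature condition only directly involves $\gamma_{k-1}$ and $\gamma_k$. To push it outward to the neighbouring invariants, I would first follow the approach of \cite{Bolton1988}: express $\partial\bar\partial$ of the higher invariants through the Toda system and use the vanishing of holomorphic differentials on $S^2$ to kill the nonconstant terms.
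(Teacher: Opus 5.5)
The paper gives no proof of this theorem; it quotes it from Bando--Ohnita and Bolton--Jensen--Rigoli--Woodward. Judged on its own, your sketch has a genuine gap exactly at the step you flag, and that step is the whole content of the theorem. Your outward induction on $|j-k|$ through the Toda equations needs a base case: $\gamma_{k-1}$ and $\gamma_k$ must \emph{separately} be constant multiples of $\sigma=\gamma_{k-1}+\gamma_k$, i.e.\ the K\"ahler angle of $f_k$ must be constant. The Toda equation at $j=k$ produces $\gamma_{k+1}$ only once $\gamma_k$ and $\gamma_{k-1}$ are known. Constant curvature constrains only the sum. Moreover, the equation for $\log(\gamma_k/\sigma)$ involves $\gamma_{k+1}$, and that for $\log(\gamma_{k-1}/\sigma)$ involves $\gamma_{k-2}$, so the system never closes. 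The induction starts only for $k=0$ or $k=n$, where $\sigma$ is a single $\gamma_j$. That is Calabi's case, and the difficulty lies entirely in $0<k<n$. Your bullets do not supply the missing input. The classical holomorphic differentials that vanish on $S^2$ are the ones that make the harmonic sequence isotropic, so they are already built into the Toda system, and you name no further differential. Nor do you specify any function or differential inequality to which a maximum principle would apply.

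One way to supply the base case uses the global polynomial structure on $S^2$. Take a polynomial lift $F_0$ of the directrix in an affine coordinate $z$ and set $\Lambda_j\coloneqq F_0\wedge F_0'\wedge\cdots\wedge F_0^{(j)}$; then $\gamma_j=\partial\bar\partial\log|\Lambda_j|^2$. After a M\"obius change of $z$, constant curvature gives $\sigma=d(1+|z|^2)^{-2}$. Hence $\log(|\Lambda_{k-1}|^2|\Lambda_k|^2)-d\log(1+|z|^2)$ is harmonic on $\C$ apart from singularities $m_a\log|z-a|^2$ with $m_a>0$. Polynomial growth then yields $|\Lambda_{k-1}|^2|\Lambda_k|^2=|h|^2(1+|z|^2)^d$ with $h$ a polynomial and $d\in\mathbb Z$. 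Now treat $z$ and $w=\bar z$ as independent variables. Unique factorization in $\C[z,w]$, where $1+zw$ is irreducible, together with the Hermitian symmetry of each factor, splits this identity into $|\Lambda_{k-1}|^2=c_1|h_1|^2(1+|z|^2)^{d_1}$ and $|\Lambda_k|^2=c_2|h_2|^2(1+|z|^2)^{d_2}$. Therefore $\gamma_{k-1}=d_1(1+|z|^2)^{-2}$ and $\gamma_k=d_2(1+|z|^2)^{-2}$. With this base case, the rest of your argument finishes the proof: the outward induction, the boundary conditions, Calabi rigidity for $f_0$, and the commutation of Gauss transforms with isometries and reparametrizations.

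One correction in that last part. If $\gamma_j=c_j\sigma$, then $\partial\bar\partial\log\gamma_j=\partial\bar\partial\log\sigma$ for every $j$, so the constants satisfy $c_{j+1}-2c_j+c_{j-1}=-c$ with $c$ independent of $j$. Your right-hand side $-c\gamma_j$ would instead force $c_j\propto\sin\bigl((j+1)\pi/(n+1)\bigr)$ rather than $(j+1)(n-j)$.
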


We use the following line bundles to describe the normal coefficients.
For $r\in\tfrac12\mathbb Z$, define the complex line bundle
\[
 L_r\coloneqq(\SU(2)\times\C)/{\sim},\qquad
 (g,\xi)\sim(t_ug,e^{-2r\mathrm{i}u}\xi),
\]
with projection $[g,\xi]\mapsto[g]=Tg$.
The right $\SU(2)$-action on $L_r$ is
$[g,\xi]\cdot h\coloneqq[gh,\xi]$, $h\in\SU(2)$.
A smooth section of $L_r$ is represented by a smooth function
$f:\SU(2)\to\C$ satisfying
\[
 f(t_ug)=e^{-2r\mathrm{i}u}f(g),\qquad
 \text{with section }[g]\longmapsto[g,f(g)].
\]
The equivariance condition makes the section independent of the
representative $g$.  We identify sections with these functions and write
$\Gamma(L_r)$ and $L^2(L_r)$ for the spaces of smooth and square-integrable
sections, respectively, using the fibre metric induced by $|\xi|^2$
and the area measure of the induced metric.
For a smooth function $f$ on $\SU(2)$ and $X\in\mathfrak{su}(2)$, define
\[
 D_Xf(g)\coloneqq\left.\frac{\dd}{\dd t}\right|_{t=0}
 f\bigl(\exp(tX)g\bigr),
\]
and extend complex-linearly in $X$.  In terms of the real basis above,
\[
 D_A=\tfrac12\bigl(D_{\varepsilon_2}-\mathrm{i}D_{\varepsilon_3}\bigr),\qquad
 D_B=-\tfrac12\bigl(D_{\varepsilon_2}+\mathrm{i}D_{\varepsilon_3}\bigr).
\]
The Maurer--Cartan coframe in \eqref{eq:Maurer-Cartan} gives
\[
 \dd f=(D_{\varepsilon_1}f)\omega
       +(D_{\varepsilon_2}f)\omega_1+(D_{\varepsilon_3}f)\omega_2.
\]
For a section of $L_r$, equivariance gives
$D_{\varepsilon_1}f=-2r\mathrm{i}f$.
The horizontal distribution $\omega=0$ induces a unitary connection
on $L_r$: removing the vertical term $-2r\mathrm{i}f\omega$ and using
$\varphi=\omega_1+\mathrm{i}\omega_2$ gives
\begin{equation}
\label{eq:horizontal-derivatives}
 \nabla f\coloneqq\dd f+2r\mathrm{i}\omega f
 =(D_Af)\varphi-(D_Bf)\overline\varphi.
\end{equation}
Thus $D_A:\Gamma(L_r)\to\Gamma(L_{r+1})$ and
$D_B:\Gamma(L_r)\to\Gamma(L_{r-1})$.

Let $\Delta\coloneqq-\nabla^*\nabla$ be the connection Laplacian on $L_r$ for the
metric $4D_{\lambda,n}\varphi\overline\varphi$.
For $i=2,3$, the curves $t\mapsto\exp(t\varepsilon_i)g$ are horizontal
lifts of geodesics with orthogonal initial velocities of length
$2\sqrt{D_{\lambda,n}}$.
Taking the trace of the second covariant derivative along these curves gives
\begin{equation}
\label{eq:line-laplacian-horizontal}
 \Delta=\frac{D_{\varepsilon_2}^{\,2}+D_{\varepsilon_3}^{\,2}}{4D_{\lambda,n}}
       =-\frac{D_AD_B+D_BD_A}{2D_{\lambda,n}}.
\end{equation}

For a minimal immersion of a closed surface, let $\II$ and $\nabla^\perp$
denote its second fundamental form and normal connection.
For a local orthonormal tangent frame $e_1,e_2$, define
\[
 \mathcal B V\coloneqq\sum_{i,j}\langle\II(e_i,e_j),V\rangle\II(e_i,e_j),\qquad
 \mathcal R V\coloneqq\sum_i(\overline R(V,e_i)e_i)^\perp,
\]
where $\overline R$ is the ambient curvature tensor and $\perp$ denotes
normal projection.
We use the trace Laplacian $\Delta^\perp\coloneqq-(\nabla^\perp)^*\nabla^\perp$.
The second variation formula \cite{EellsLemaire1983,Simons1968} gives
the normal Jacobi operator $J$ and the area Hessian $Q$ on real normal fields:
\begin{equation}
\label{eq:Jacobi-operator-definition}
 J=\Delta^\perp+\mathcal B+\mathcal R,\qquad
 Q(V,W)=-\int\langle JV,W\rangle\,\dd A.
\end{equation}
Here $\dd A$ is the induced area measure.  The Morse index counts the
positive eigenvalues of $J$ with real multiplicity, and the normal nullity
is $\dim_\R\ker J$.

\section{The Jacobi operator}
\label{sec:Jacobi-operator}

In this section, we compute the normal Jacobi operator of the linearly full
Veronese spheres and reduce it to finite real symmetric matrices.

Throughout this section and Section~\ref{subsec:linearly-full-index}, we fix
$n\ge1$ and $0\le k\le n$ and use the notation of Section~\ref{sec:preliminaries}.

\subsection{The Jacobi operator in normal coefficients}
\label{subsec:normal-connection}

By \eqref{eq:dZ-mu}, modulo $\C Z_\lambda$ one has
\[
 \dd Z_\lambda=a_{\lambda,n}Z_{\lambda-2}\varphi-b_{\lambda,n}Z_{\lambda+2}\overline\varphi.
\]
Consequently, an oriented orthonormal tangent frame is
\begin{equation}
\label{eq:tangent-frame}
 e_1\coloneqq\frac{a_{\lambda,n}Z_{\lambda-2}-b_{\lambda,n}Z_{\lambda+2}}{2\sqrt{D_{\lambda,n}}},\qquad
 e_2\coloneqq\frac{\mathrm{i}(a_{\lambda,n}Z_{\lambda-2}+b_{\lambda,n}Z_{\lambda+2})}{2\sqrt{D_{\lambda,n}}}.
\end{equation}
We also write $e_i$ for the corresponding domain tangent vectors under
$\dd\phi_{\lambda,n}$.
If $0<k<n$, the orthogonal complement of the tangent plane in
$\C Z_{\lambda-2}\oplus\C Z_{\lambda+2}$ has orthonormal frame
\begin{equation}
\label{eq:first-normal-frame}
 \nu_1\coloneqq\frac{b_{\lambda,n}Z_{\lambda-2}+a_{\lambda,n}Z_{\lambda+2}}{2\sqrt{D_{\lambda,n}}},\qquad
 \nu_2\coloneqq\frac{\mathrm{i}(-b_{\lambda,n}Z_{\lambda-2}+a_{\lambda,n}Z_{\lambda+2})}{2\sqrt{D_{\lambda,n}}}.
\end{equation}
Equations~\eqref{eq:tangent-frame}--\eqref{eq:first-normal-frame} give
the orthogonal normal splitting
\[
 N_{\phi_{\lambda,n}}S^2
 =\operatorname{span}_{\R}\{\nu_1,\nu_2\}
 \oplus\bigoplus_{r=2}^{n-k}(\C Z_{\lambda-2r})_\R
 \oplus\bigoplus_{r=2}^{k}(\C Z_{\lambda+2r})_\R.
\]
Here $(\C Z)_\R$ denotes the underlying real two-plane bundle.
The local oriented orthonormal frames are, respectively,
\[
 (\nu_1,\nu_2),\qquad
 \bigl(Z_{\lambda-2r}/2,-\mathrm{i}Z_{\lambda-2r}/2\bigr),\qquad
 \bigl(Z_{\lambda+2r}/2,\mathrm{i}Z_{\lambda+2r}/2\bigr).
\]
For $k=0$ or $k=n$, the central summand is omitted.

Put $\alpha\coloneqq a_{\lambda,n}\,a_{\lambda-2,n}$ and $\beta\coloneqq b_{\lambda,n}\,b_{\lambda+2,n}$.
Differentiating \eqref{eq:tangent-frame} with \eqref{eq:dZ-mu} and taking
normal components gives
\begin{equation}
\label{eq:II-frame-formula}
 \II(e_1,e_1)=\frac{\alpha Z_{\lambda-4}+\beta Z_{\lambda+4}}{4D_{\lambda,n}},\qquad
 \II(e_1,e_2)=\frac{\mathrm{i}(\alpha Z_{\lambda-4}-\beta Z_{\lambda+4})}{4D_{\lambda,n}},\qquad
 \II(e_2,e_2)=-\II(e_1,e_1).
\end{equation}

Using the normal frames above, every real normal field $V$ has a unique
expansion
\begin{equation}
\label{eq:normal-field-complex-coordinates}
 V=\frac{b_{\lambda,n}\overline{A_1}Z_{\lambda-2}+a_{\lambda,n}A_1Z_{\lambda+2}}{2\sqrt{D_{\lambda,n}}}
  +\frac12\sum_{r=2}^{n-k}\overline{A_r^-}Z_{\lambda-2r}
  +\frac12\sum_{r=2}^{k}A_r^+Z_{\lambda+2r}.
\end{equation}
When $k=0$ or $k=n$, all $A_1$-terms are omitted.
The coefficients are complex-valued functions on $\SU(2)$, given explicitly by
\[
 A_r^-=\left\langle V,\frac{Z_{\lambda-2r}}2\right\rangle
 +\mathrm{i}\left\langle V,-\frac{\mathrm{i}Z_{\lambda-2r}}2\right\rangle,\qquad
 A_r^+=\left\langle V,\frac{Z_{\lambda+2r}}2\right\rangle
 +\mathrm{i}\left\langle V,\frac{\mathrm{i}Z_{\lambda+2r}}2\right\rangle.
\]
For $0<k<n$, $A_1=\langle V,\nu_1\rangle+\mathrm{i}\langle V,\nu_2\rangle$.
The minus sign in the second lower frame vector accounts for the
conjugate coefficient in \eqref{eq:normal-field-complex-coordinates}.
The squared norm is
\[
 |V|^2=|A_1|^2+\sum_r|A_r^-|^2+\sum_r|A_r^+|^2.
\]
We use the complex structure on the normal bundle induced by
multiplication of these coefficients by $\mathrm{i}$.

Relative to the lift $Z_\lambda$, the directions $Z_{\lambda\pm2r}$
have weights $\pm2r$.  With the conjugation on the lower branch in
\eqref{eq:normal-field-complex-coordinates}, the normal coefficients satisfy
\begin{equation}
\label{eq:normal-coefficient-equivariance}
 A_1(t_ug)=e^{-2\mathrm{i}u}A_1(g),\qquad
 A_r^\pm(t_ug)=e^{-2\mathrm{i}ru}A_r^\pm(g).
\end{equation}

Thus $A_1\in\Gamma(L_1)$ and $A_r^\pm\in\Gamma(L_r)$.
A coefficient in $\Gamma(L_r)$ has line index $r$.
We let $\nabla$ act componentwise as in \eqref{eq:horizontal-derivatives}.
Write $(\nabla^\perp V)_1$ and $(\nabla^\perp V)_r^\pm$ for the
complex-valued one-form coefficients of $\nabla^\perp V$ in the normal-coordinate
convention \eqref{eq:normal-field-complex-coordinates}.
Differentiating \eqref{eq:normal-field-complex-coordinates} using
\eqref{eq:dZ-mu}, subtracting the connection term $\lambda\mathrm{i}\omega V$
arising from the unit lift $Z_\lambda$, and taking normal components gives
\begin{equation}
\label{eq:normal-connection-components}
 \begin{aligned}
 (\nabla^\perp V)_1
 &=\nabla A_1
 -\frac{b_{\lambda,n}a_{\lambda-2,n}}{\sqrt{D_{\lambda,n}}}A_2^-\varphi
 +\frac{a_{\lambda,n}b_{\lambda+2,n}}{\sqrt{D_{\lambda,n}}}A_2^+\varphi,\\[5pt]
 (\nabla^\perp V)_2^-
 &=\nabla A_2^-
 +\frac{b_{\lambda,n}a_{\lambda-2,n}}{\sqrt{D_{\lambda,n}}}A_1\overline\varphi
 -a_{\lambda-4,n}A_3^-\varphi,\\[5pt]
 (\nabla^\perp V)_2^+
 &=\nabla A_2^+
 -\frac{a_{\lambda,n}b_{\lambda+2,n}}{\sqrt{D_{\lambda,n}}}A_1\overline\varphi
 +b_{\lambda+4,n}A_3^+\varphi,\\[5pt]
 (\nabla^\perp V)_r^-
 &=\nabla A_r^-
 +a_{\lambda-2r+2,n}A_{r-1}^-\overline\varphi
 -a_{\lambda-2r,n}A_{r+1}^-\varphi,\\[5pt]
 (\nabla^\perp V)_r^+
 &=\nabla A_r^+
 -b_{\lambda+2r-2,n}A_{r-1}^+\overline\varphi
 +b_{\lambda+2r,n}A_{r+1}^+\varphi.
 \end{aligned}
\end{equation}
The last two lines apply for $3\le r\le n-k$ and $3\le r\le k$,
respectively.  Absent components and their terms are omitted.

Let $u_1,\ldots,u_{n-1}$ be the normal coefficients in the order
$A_1,A_2^-,\ldots,A_{n-k}^-,A_2^+,\ldots,A_k^+$, omitting absent components,
and write $[\nabla^\perp V]_i$ for the corresponding one-form coefficients.
Equation~\eqref{eq:normal-connection-components} defines the constant real
matrix $C=(C_{ij})$ by
\[
 \left[\nabla^\perp V\right]_i=\nabla u_i+
 \sum_{j=1}^{n-1}\left(C_{ij}\bar\varphi-C_{ji}\varphi\right)u_j,
 \qquad 1\le i\le n-1.
\]
All coefficient matrices below use this order.

The operators $D_A,D_B$ and $\Delta$ act componentwise, using the line
index of each input.  Fix a lift $g\in\SU(2)$, and let $X_a$ be the initial
velocity of $t\mapsto[\exp(t\varepsilon_a)g]$, $a=2,3$.
Evaluating the preceding one-form identity along the corresponding
horizontal lifts gives, for $1\le i\le n-1$,
\[
 \begin{aligned}
 [\nabla^\perp_{X_2}V]_i
 &=D_{\varepsilon_2}u_i+\sum_{j=1}^{n-1}(C_{ij}-C_{ji})u_j,\\[4pt]
 [\nabla^\perp_{X_3}V]_i
 &=D_{\varepsilon_3}u_i-\mathrm{i}\sum_{j=1}^{n-1}(C_{ij}+C_{ji})u_j.
 \end{aligned}
\]
The projected curves are geodesics, and $X_2,X_3$ are orthogonal with
length $2\sqrt{D_{\lambda,n}}$.  Thus, as in
\eqref{eq:line-laplacian-horizontal}, taking the trace of the second
covariant derivative gives, in normal coefficients,
\[
 \Delta^\perp
 =\frac{(D_{\varepsilon_2}+C-C^*)^2+
        (D_{\varepsilon_3}-\mathrm{i}(C+C^*))^2}{4D_{\lambda,n}}.
\]
In this coefficient representation, we use the same letters
$\mathcal B,\mathcal R,J$ for the corresponding operators.
Expanding the squares with $D_{\varepsilon_2}=D_A-D_B$ and
$D_{\varepsilon_3}=\mathrm{i}(D_A+D_B)$, and adding the two terms
$\mathcal B$ and $\mathcal R$, gives
\begin{equation}
\label{eq:Jacobi-differential-coefficients}
 J=\Delta+\frac{CD_A+C^*D_B}{D_{\lambda,n}}
       -\frac{C^*C+CC^*}{2D_{\lambda,n}}
       +\mathcal B+\mathcal R.
\end{equation}

In the chosen normal coefficients, $\mathcal B$ and $\mathcal R$ are
constant real matrices preserving the line index
(see \ref{app:matrix-calculations}), so the operator in
\eqref{eq:Jacobi-differential-coefficients} is complex-linear.

\subsection{Reduction to finite matrices}
\label{subsec:coefficient-modes}

We decompose the coefficient spaces $L^2(L_r)$ into irreducible
$\SU(2)$-representations.
For $r\in\tfrac12\mathbb Z$, right translation acts on $L^2(L_r)$ by
$(f\cdot h)(g)\coloneqq f(gh^{-1})$.  The action of $T$ on the fibre
$(L_r)_{[I]}$ is $[I,\xi]\cdot t_u=[I,e^{2r\mathrm{i}u}\xi]$.
Frobenius reciprocity gives the multiplicity formula
\cite{BrockerTomDieck1985,Helgason1984}
\[
 \dim_\C\operatorname{Hom}_{\SU(2)}(V_s,L^2(L_r))
 =\dim_\C\operatorname{Hom}_T(V_s,(L_r)_{[I]})
 =\begin{cases}
 1,&s\ge2|r|,\quad s\equiv2r\pmod2,\\
 0,&\text{otherwise}.
 \end{cases}
\]
Here $\operatorname{Hom}_G$ denotes the space of $G$-equivariant
complex-linear maps.  The last equality follows from the weights
$s,s-2,\ldots,-s$ of $V_s$, each of multiplicity one.
For every $s$ satisfying these conditions, define
\[
 Y_{r,s}^{j}(g)\coloneqq
 \overline{\bigl(v_{2r,s}\rho_s(g)\bigr)_j},\qquad
 g\in\SU(2),\quad 1\le j\le s+1.
\]
The subscript $j$ denotes the $j$th coordinate in the ordered weight
basis of $V_s$.  Since $v_{2r,s}\rho_s(t_u)=e^{2r\mathrm{i}u}v_{2r,s}$,
\[
 Y_{r,s}^{j}(t_ug)=e^{-2r\mathrm{i}u}Y_{r,s}^{j}(g),
\]
so $Y_{r,s}^{j}$ is a section of $L_r$.  Unitarity of $\rho_s$ also gives
\[
 Y_{r,s}^{j}(gh^{-1})
 =\sum_{q=1}^{s+1}\rho_s(h)_{jq}\,Y_{r,s}^{q}(g),\qquad h\in\SU(2).
\]
Thus these functions transform in the same way as the weight basis of
$V_s$.  For fixed $r$ and admissible $s,s'$, Schur orthogonality gives
\[
 \int_{S^2}Y_{r,s}^{j}\,\overline{Y_{r,s'}^{j'}}\,\dd A
 =\frac{4\pi D_{\lambda,n}}{s+1}\,\delta_{ss'}\delta_{jj'}.
\]
Together with the multiplicity formula above, this gives the complete
orthogonal decomposition
\begin{equation}
\label{eq:coefficient-modes}
 L^2(L_r)=\widehat{\bigoplus}_{\substack{s\ge2|r|\\s\equiv2r\ (2)}}
 \operatorname{span}_{\C}\left\{
 Y_{r,s}^{j}:
 1\le j\le s+1\right\}.
\end{equation}
Each summand is isomorphic to $V_s$; the hat denotes the Hilbert
orthogonal sum, with convergence in $L^2$.

The moving-frame formula \eqref{eq:dZ-mu}, with $n=s$ and $\mu=2r$,
gives after coordinatewise conjugation
\[
 \dd Y_{r,s}^{j}+2r\mathrm{i}\omega Y_{r,s}^{j}
 =-b_{2r,s}Y_{r+1,s}^{j}\varphi
  +a_{2r,s}Y_{r-1,s}^{j}\overline\varphi.
\]
Comparison with \eqref{eq:horizontal-derivatives} gives
\begin{equation}
\label{eq:coefficient-ladder}
 D_AY_{r,s}^{j}=-b_{2r,s}Y_{r+1,s}^{j},\qquad
 D_BY_{r,s}^{j}=-a_{2r,s}Y_{r-1,s}^{j}.
\end{equation}
Terms beyond the weight range are zero.  The ladder identities give
\[
 D_AD_BY_{r,s}^{j}=a_{2r,s}^{\,2}Y_{r,s}^{j},\qquad
 D_BD_AY_{r,s}^{j}=b_{2r,s}^{\,2}Y_{r,s}^{j}.
\]
Hence \eqref{eq:line-laplacian-horizontal} yields
\begin{equation}
\label{eq:line-bundle-spectrum}
 \Delta f=-\frac{a_{2r,s}^{\,2}+b_{2r,s}^{\,2}}{2D_{\lambda,n}}f
 =-\frac{s(s+2)-4r^2}{4D_{\lambda,n}}f
\end{equation}
for every $f$ in the summand indexed by $s$ in
\eqref{eq:coefficient-modes}.

Since the normal line indices are positive integers,
\eqref{eq:coefficient-modes} shows that only the representations $V_{2\ell}$
occur, with $\ell\ge r$ for a coefficient in $L_r$.
On these summands, the coefficients in the ladder identities
\eqref{eq:coefficient-ladder} are
\begin{equation}
\label{eq:normal-mode-coefficients}
 a_{2r,2\ell}=\sqrt{\ell(\ell+1)-r(r-1)},\qquad
 b_{2r,2\ell}=\sqrt{\ell(\ell+1)-r(r+1)},\qquad 1\le r\le\ell.
\end{equation}
For fixed $\ell\ge1$, put
\[
 h_\ell\coloneqq\min(n-k,\ell),\qquad
 m_\ell\coloneqq h_\ell+\min(k,\ell)-1.
\]
Exactly $m_\ell$ normal coefficients have line index at most $\ell$.
We retain these coefficients in their original relative order and renumber
them consecutively.
When $m_\ell=0$, there is no normal block for this value of $\ell$.
Otherwise, fix $1\le j\le2\ell+1$ and a constant vector
$c=(c_1,\ldots,c_{m_\ell})^{\mathrm T}\in\C^{m_\ell}$.
For $0<k<n$, define a normal field $V$ by
\eqref{eq:normal-field-complex-coordinates} with
\[
 \begin{aligned}
 A_1&=c_1Y_{1,2\ell}^{j},\\[3pt]
 A_r^-&=(-1)^{r-1}c_rY_{r,2\ell}^{j}
       &&(2\le r\le h_\ell),\\[3pt]
 A_r^+&=c_{h_\ell+r-1}Y_{r,2\ell}^{j}
       &&(2\le r\le\min(k,\ell)).
 \end{aligned}
\]
For $k=0$ and $k=n$, respectively, use
\[
 A_r^-=(-1)^{r-1}c_{r-1}Y_{r,2\ell}^{j},\qquad
 A_r^+=c_{r-1}Y_{r,2\ell}^{j}\qquad(2\le r\le\min(n,\ell)).
\]
All other normal coefficients are zero.  Let $E_p$ be the field
obtained by setting $c_p=1$ and all other constants equal to zero.

The Laplacian in \eqref{eq:Jacobi-differential-coefficients} acts
diagonally by \eqref{eq:line-bundle-spectrum}; the constant order-zero
matrices preserve $r$, and the first-order terms couple adjacent line
indices by \eqref{eq:coefficient-ladder} and
\eqref{eq:normal-mode-coefficients}.
These operations preserve $\ell$ and $j$, and $b_{2\ell,2\ell}=0$
ensures that no component of line index $\ell+1$ occurs.
Thus $J$ preserves the complex span of $E_1,\ldots,E_{m_\ell}$.
Its matrix $M_\ell$ satisfies
\begin{equation}
\label{eq:multiplicity-ordering}
 V=\sum_{p=1}^{m_\ell}c_pE_p,\qquad
 JV=\sum_{p=1}^{m_\ell}(M_\ell c)_pE_p,\qquad c\in\C^{m_\ell}.
\end{equation}
Thus $JV=\mu V$ is equivalent to $M_\ell c=\mu c$.
The lower-branch signs make the adjacent off-diagonal entries of $M_\ell$
along that branch positive.

For fixed $\ell$, the same $M_\ell$ represents $J$ for all $j$.
The spaces constructed for distinct $j$ are mutually orthogonal,
and each basis field satisfies
$\|E_p\|_{L^2}^2=4\pi D_{\lambda,n}/(2\ell+1)$.
Varying $j$ in each normal coordinate gives one copy of $V_{2\ell}$.
With the coefficient complex structure, completeness of
\eqref{eq:coefficient-modes} yields
\[
 L^2(N_{\phi_{\lambda,n}}S^2)
 \cong\widehat{\bigoplus}_{\ell=1}^{\infty}(V_{2\ell})^{\oplus m_\ell}.
\]

\begin{prop}[Matrix blocks of the Jacobi operator]
\label{prop:Jacobi-matrices}
For $\ell\ge1$ with $m_\ell>0$, the matrix $M_\ell$ in
\eqref{eq:multiplicity-ordering} is real symmetric.
For $0<k<n$, its diagonal entries are
\begin{equation}
\label{eq:Jacobi-matrix-diagonals}
 \begin{aligned}
 (M_\ell)_{1,1}&=\frac{D_{\lambda,n}+2-\ell(\ell+1)}{D_{\lambda,n}},\\[3pt]
 (M_\ell)_{r,r}&=\frac{r(2r-\lambda)-\ell(\ell+1)}{D_{\lambda,n}}
                         +\frac{\delta_{r2}\alpha^2}{D_{\lambda,n}^2}
                         &&(2\le r\le h_\ell),\\[3pt]
 (M_\ell)_{h_\ell+r-1,h_\ell+r-1}&=\frac{r(2r+\lambda)-\ell(\ell+1)}{D_{\lambda,n}}
                         +\frac{\delta_{r2}\beta^2}{D_{\lambda,n}^2}
                         &&(2\le r\le\min(k,\ell)).
 \end{aligned}
\end{equation}
Here $\delta_{r2}$ is the Kronecker symbol.

\Needspace{12\baselineskip}
Its off-diagonal entries, up to symmetry, are
\begin{equation}
\label{eq:Jacobi-matrix-edges}
 \begin{alignedat}{2}
 (M_\ell)_{1,2}&=\frac{b_{\lambda,n}\,a_{\lambda-2,n}
                         \sqrt{\ell(\ell+1)-2}}{D_{\lambda,n}^{3/2}},&\qquad
 (M_\ell)_{1,h_\ell+1}&=\frac{a_{\lambda,n}\,b_{\lambda+2,n}
                         \sqrt{\ell(\ell+1)-2}}{D_{\lambda,n}^{3/2}},\\[5pt]
 (M_\ell)_{r,r+1}&=\frac{a_{\lambda-2r,n}
                         \sqrt{\ell(\ell+1)-r(r+1)}}{D_{\lambda,n}},&\qquad
 (M_\ell)_{h_\ell+r-1,h_\ell+r}&=\frac{b_{\lambda+2r,n}
                         \sqrt{\ell(\ell+1)-r(r+1)}}{D_{\lambda,n}},\\[5pt]
 (M_\ell)_{2,h_\ell+1}&=-\frac{\alpha\beta}{D_{\lambda,n}^2}.
 \end{alignedat}
\end{equation}
The first two entries occur when $h_\ell\ge2$ and $\min(k,\ell)\ge2$,
respectively, and the last occurs when both inequalities hold.
In the third and fourth entries, the ranges are $2\le r<h_\ell$
and $2\le r<\min(k,\ell)$, respectively.

For $k=0$ or $k=n$, row $r-1$ corresponds to $A_r^-$ or $A_r^+$,
respectively, and the entries are
\begin{equation}
\label{eq:Jacobi-matrix-endpoints}
 \begin{aligned}
 (M_\ell)_{r-1,r-1}
 &=\frac{r(2r-n)-\ell(\ell+1)}{n}
   +\frac{2(n-1)}{n}\delta_{r2}
   &&(2\le r\le\min(n,\ell)),\\[3pt]
 (M_\ell)_{r-1,r}
 &=\frac{\sqrt{(r+1)(n-r)\bigl(\ell(\ell+1)-r(r+1)\bigr)}}{n}
   &&(2\le r<\min(n,\ell)).
 \end{aligned}
\end{equation}
In all cases, the remaining off-diagonal entries are zero.

These blocks exhaust the spectrum of $J$, and for every $\mu\in\R$,
\[
 \dim_\R\ker(J-\mu\Id)
 =2\sum_{\substack{\ell\ge1\\m_\ell>0}}(2\ell+1)
       \dim_\C\ker(M_\ell-\mu\Id).
\]
For $\ell=1$, one has $M_1=(1)$ if $0<k<n$, and $m_1=0$ if $k=0,n$.
\end{prop}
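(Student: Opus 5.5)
The proof substitutes the mode fields $E_p$ into the coefficient form \eqref{eq:Jacobi-differential-coefficients} of $J$ and reads off $M_\ell$ term by term. Four ingredients are needed.

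\textbf{Step 1: the zeroth-order matrices $\mathcal B$ and $\mathcal R$.}
\begin{itemize}
\item For $\mathcal B$, use \eqref{eq:II-frame-formula}. The second fundamental form takes values only in the $Z_{\lambda\mp4}$ directions, i.e.\ the coefficients $A_2^\mp$. Since $\II(e_2,e_2)=-\II(e_1,e_1)$, we get $\mathcal BV=2\langle\II_{11},V\rangle\II_{11}+2\langle\II_{12},V\rangle\II_{12}$.
\item In the coefficients $A_2^-,A_2^+$ (with the conjugation convention on the lower branch), this becomes a $2\times2$ real block. Its entries are $\alpha^2/D^2$ and $\beta^2/D^2$ on the diagonal and $\pm\alpha\beta/D^2$ off the diagonal. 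The sign is fixed by the factor $(-1)^{r-1}$ attached to $A_2^-$.
\item For $\mathcal R$, use the Fubini--Study curvature with holomorphic sectional curvature $1$: $\overline R(X,Y)Z=\tfrac14\bigl(\langle Y,Z\rangle X-\langle X,Z\rangle Y+\langle JY,Z\rangle JX-\langle JX,Z\rangle JY+2\langle X,JY\rangle JZ\bigr)$.
\item Summing over $e_1,e_2$ gives $\mathcal RV=\tfrac12V+\tfrac32\langle\text{Kähler angle term}\rangle$. Concretely, $\mathcal R$ acts on $(\nu_1,\nu_2)$ and on each $(\C Z_{\lambda\pm 2r})_\R$ by constants depending on whether $J e_1$ is tangent.
\item I expect $\mathcal R=\tfrac12+\tfrac32\cos\theta\cdot(\pm\mathrm{i}\text{ on the normal part of }\mathrm{i}e_1)$. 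Evaluating this in the frames shows that $\mathcal R$ is diagonal and real in the coefficients and preserves the line index.
\end{itemize}

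\textbf{Step 2: the connection matrix $C$.} Read $C$ off \eqref{eq:normal-connection-components}. Its nonzero entries couple the coefficient with line index $r$ to those with line index $r\pm1$, with coefficients $b_{\lambda,n}a_{\lambda-2,n}/\sqrt D$, $a_{\lambda-2r,n}$, and so on.
\begin{itemize}
\item Compute $C^*C+CC^*$. It is diagonal except for a possible $A_2^-$--$A_2^+$ cross term through $A_1$.
\item I expect that cross term, $-(b a_{\lambda-2})(a b_{\lambda+2})/D$, to combine with the $\mathcal B$ cross term to give exactly $-\alpha\beta/D^2$. This uses $\alpha\beta=a b\,a_{\lambda-2}b_{\lambda+2}$ together with a normalisation of $C$ by $\sqrt D$, and it must be verified carefully.
\end{itemize}

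\textbf{Step 3: applying the operators to the modes.}
\begin{itemize}
\item By \eqref{eq:line-bundle-spectrum}, $\Delta$ on line index $r$ gives $-(\ell(\ell+1)-r^2)/D$.
\item $D_A$ and $D_B$ shift the line index by \eqref{eq:coefficient-ladder}, producing the factors $\sqrt{\ell(\ell+1)-r(r\pm1)}$ of \eqref{eq:normal-mode-coefficients}.
\item The first-order term $(CD_A+C^*D_B)/D$ therefore gives the off-diagonal entries \eqref{eq:Jacobi-matrix-edges}. The alternating signs $(-1)^{r-1}$ on the lower branch and the minus signs in \eqref{eq:coefficient-ladder} make these entries positive.
\item Symmetry of $M_\ell$ follows because the $(r,r+1)$ entry from $CD_A$ equals the $(r+1,r)$ entry from $C^*D_B$, since $b_{2r,2\ell}=a_{2r+2,2\ell}$.
\end{itemize}

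\textbf{Step 4: the diagonal entries.} Sum the four contributions: $r^2$ from $\Delta$, the $C^*C+CC^*$ diagonal, the $\mathcal R$ constant, and the $\mathcal B$ term when $r=2$. Simplify using $a_\mu^2=(k'+1)(n-k')$, $a^2+b^2=D$ and $a^2-b^2=\lambda$ to reach $r(2r\mp\lambda)-\ell(\ell+1)$. For example, at $A_1$ one should get $D+2-\ell(\ell+1)$.

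This bookkeeping, especially the exact values of $\mathcal R$ on each normal summand and the $A_1$ diagonal, is the main obstacle. I would check it against the $\ell=1$ case: there $M_1=(1)$, and $\mathcal B=0$ on $A_1$. The endpoint cases $k=0,n$ follow by the same computation with $D=n$, where the $\mathcal B$ term $\alpha^2/n^2=2(n-1)/n$ appears in \eqref{eq:Jacobi-matrix-endpoints}.

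\textbf{Step 5: spectrum and multiplicities.} Completeness of \eqref{eq:coefficient-modes} and invariance of each block show that the blocks exhaust the spectrum. Each complex eigenvector of $M_\ell$ yields $2\ell+1$ complex-independent fields, hence $2(2\ell+1)$ real dimensions. Real symmetry of $M_\ell$ makes eigenvalues real, and complex-linearity of $J$ gives the stated dimension formula.
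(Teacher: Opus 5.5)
Your overall route is the paper's. You substitute the modes $Y^j_{r,2\ell}$ into the coefficient form of $J$, so that $\Delta$ gives $(r^2-\ell(\ell+1))/D$ and the first-order terms become $-C_\ell B_\ell/D$ and $-B_\ell C_\ell^{\mathrm T}/D$, with $B_\ell=\mathrm{diag}(b_{2r_p,2\ell})$. You then conjugate by the sign matrix. The gap is that the proposition consists of exactly the resulting bookkeeping, and both zeroth-order inputs you commit to in Step~1 are wrong, so the plan as written yields incorrect entries.

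\textbf{The curvature term.} Write $D=D_{\lambda,n}$, $a=a_{\lambda,n}$, $b=b_{\lambda,n}$. In $\sum_i\overline R(V,e_i)e_i$, the term $\langle\mathrm ie_i,e_i\rangle\,\mathrm iV$ vanishes because both slots carry $e_i$. The two remaining $\mathrm i$-terms combine to $\tfrac34\langle V,\mathrm ie_i\rangle\,\mathrm ie_i$. So there is no $\cos\theta$ term and no rotation, which would not even be symmetric. Instead
\[
 \langle\mathcal RV,V\rangle=\tfrac12|V|^2+\tfrac34\sum_{i=1}^2\langle V,\mathrm ie_i\rangle^2
 =\tfrac12|V|^2+\frac{3a^2b^2}{D^2}|A_1|^2 .
\]
That is, $\mathcal R=\tfrac12+\tfrac34\sin^2\theta$ on $A_1$, with $\cos\theta=\lambda/D$, and $\mathcal R=\tfrac12$ on every other coefficient. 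This $3a^2b^2/D^2$ cancels exactly against $-\frac1{2D}(C^{\mathrm T}C+CC^{\mathrm T})_{1,1}$, where $(C^{\mathrm T}C+CC^{\mathrm T})_{1,1}=(b^2a_{\lambda-2,n}^2+a^2b_{\lambda+2,n}^2)/D=(6a^2b^2-D^2-2D)/D$. The cancellation leaves the constant $1+1/D$, hence $(M_\ell)_{1,1}=(D+2-\ell(\ell+1))/D$. With your guess, even the check $M_1=(1)$ fails.

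\textbf{The second fundamental form term.} Your $\mathcal B$ block is too large by a factor of two. Since $|\II(e_1,e_1)|^2=|\II(e_1,e_2)|^2=(\alpha^2+\beta^2)/(4D^2)$, one gets $\langle\mathcal BV,V\rangle=|\alpha A_2^-+\beta A_2^+|^2/(2D^2)$. So the entries on $(A_2^-,A_2^+)$ are $\alpha^2/(2D^2)$, $\beta^2/(2D^2)$ and $\alpha\beta/(2D^2)$, as forced by $\mathrm{tr}_{\R}\mathcal B=|\II|^2$; your values give $2|\II|^2$. The $\ell=1$ check cannot detect this, because $\mathcal B$ vanishes on $A_1$.

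The other half of each $r=2$ correction comes from $-\frac1{2D}(C^{\mathrm T}C+CC^{\mathrm T})$. On the lower branch its generic diagonal entry is $D+2r\lambda-2r^2$. Together with $\Delta$ and the $\tfrac12$ from $\mathcal R$, this gives $(r(2r-\lambda)-\ell(\ell+1))/D$. At $r=2$, however, the incoming coupling passes through $A_1$ with coefficient $ba_{\lambda-2,n}/\sqrt D$ rather than $a_{\lambda-2,n}$. That is a deficit of $\alpha^2/D$, which contributes $+\alpha^2/(2D^2)$.

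At $k=0$ the coupling is absent altogether, with deficit $a_{n-2,n}^2=2(n-1)=\alpha^2/n$, and symmetrically at $k=n$. So $\mathcal B$ supplies only $(n-1)/n$ of the endpoint correction $2(n-1)/n$, not all of it as you claim. Likewise $(CC^{\mathrm T})_{2,h_\ell+1}=-\alpha\beta/D$ contributes $\alpha\beta/(2D^2)$. Only its sum with $\mathcal B$ gives $\alpha\beta/D^2$, which the sign $(-1)^{r-1}=-1$ on $A_2^-$ turns into $-\alpha\beta/D^2$.

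Also form $C^{\mathrm T}C+CC^{\mathrm T}$ on the full coefficient space before restricting to the $\ell$-block. The coupling from $r=\ell$ to $r=\ell+1$ still enters the diagonal, even though $b_{2\ell,2\ell}=0$ removes the corresponding first-order term. With these corrections, your Steps~3--5 go through as in the paper.
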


The proof is given in \ref{app:matrix-calculations}.

\section{Index, nullity and Jacobi fields}
\label{sec:index}

We first compute the index, nullity and normal Jacobi kernel for the
linearly full spheres in $\CP^n$.  We then treat their totally geodesic
extensions to $\CP^N$ and prove that every normal Jacobi field is integrable.

\subsection{The linearly full case}
\label{subsec:linearly-full-index}

To determine the inertia of $M_\ell$, we introduce a tridiagonal comparison
matrix with explicitly computable spectrum, congruent to the direct sum of
$-D_{\lambda,n}M_\ell$ and two scalar blocks.
For $\ell\ge1$, define a real symmetric tridiagonal matrix $T_\ell$
of order $m_\ell+2$, with row $i$ corresponding to the integer
\begin{equation}
\label{eq:comparison-weight-basis}
 d=i-h_\ell-1,\qquad 1\le i\le m_\ell+2.
\end{equation}
Thus $d$ runs from $-h_\ell$ to $\min(k,\ell)$.
The diagonal and upper off-diagonal entries are
\begin{equation}
\label{eq:comparison-matrix-entries}
 \begin{aligned}
 (T_\ell)_{i,i}&\coloneqq\ell(\ell+1)-d(2d+\lambda)
   &&(1\le i\le m_\ell+2),\\
 (T_\ell)_{i,i+1}&\coloneqq-\sqrt{(\ell-d)(\ell+d+1)(k-d)(n-k+d+1)}
   &&(1\le i\le m_\ell+1).
 \end{aligned}
\end{equation}
The entries below the diagonal are determined by symmetry, and all
other entries are zero.

The following two results, proved in \ref{app:matrix-calculations},
give the spectrum of $T_\ell$ and its relation to $M_\ell$.

\begin{lem}[Spectrum of the comparison matrix]
\label{lem:comparison-spectrum}
For $\ell\ge1$, the eigenvalues of $T_\ell$ are simple and are
\begin{equation}
\label{eq:T-spectrum}
 \operatorname{Spec}(T_\ell)
 =\{(\ell-j)(n+\ell-j+1):j=0,\ldots,m_\ell+1\}.
\end{equation}
\end{lem}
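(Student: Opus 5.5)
The plan is to recognize $T_\ell$, up to an affine change and a diagonal sign change, as the matrix of the $\SU(2)$ Casimir operator on one weight space of the tensor product $V_{2\ell}\otimes V_n$. Its spectrum then follows from the Clebsch--Gordan decomposition. The clue is in the off-diagonal entries \eqref{eq:comparison-matrix-entries}. The factor $(\ell-d)(\ell+d+1)$ is a ladder coefficient of $V_{2\ell}$: it equals $a_{-2d,2\ell}^2$ by \eqref{eq:normal-mode-coefficients}. The factor $(k-d)(n-k+d+1)$ is a ladder coefficient of $V_n$, namely $a_{\mu,n}^2=(k'+1)(n-k')$ with $\mu=n-2k'$ and $k'=k-d-1$, i.e.\ $\mu=\lambda+2d+2$.

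\emph{Step 1: the weight space.} Consider the vectors $w_d\coloneqq v_{-2d,2\ell}\otimes v_{\lambda+2d,n}$, each of total $H$-weight $\lambda$. The factor $v_{-2d,2\ell}$ exists iff $|d|\le\ell$, and $v_{\lambda+2d,n}$ exists iff $-(n-k)\le d\le k$. So $d$ runs exactly from $-h_\ell$ to $\min(k,\ell)$, and the $w_d$ form an orthonormal basis of the $\lambda$-weight space of $V_{2\ell}\otimes V_n$, of dimension $m_\ell+2$. This matches the indexing \eqref{eq:comparison-weight-basis}.

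\emph{Step 2: the Casimir in this basis.} Use $\Omega\coloneqq\tfrac12H^2+AB+BA$, which acts on $V_s$ by the scalar $s(s+2)/2$, and the coproduct expansion
\[
 \Omega_{V_{2\ell}\otimes V_n}=\Omega_{2\ell}\otimes1+1\otimes\Omega_n+H\otimes H+A\otimes B+B\otimes A .
\]
\begin{itemize}
\item The diagonal of this operator on $w_d$ is $2\ell(\ell+1)+n(n+2)/2-2d(\lambda+2d)$. This is $2(T_\ell)_{d,d}$ plus a constant depending only on $n$.
\item The terms $A\otimes B$ and $B\otimes A$ shift $d$ by $\pm1$, by \eqref{eq:ladder-actions-section2}.
\item Their matrix entries are products of one $V_{2\ell}$ and one $V_n$ ladder coefficient. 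These products are exactly the square roots above, multiplied by $2$ after symmetrizing via $a_{\mu+2}=b_\mu$.
\end{itemize}
Hence, in the basis $w_d$ rescaled by alternating signs $(-1)^d$ to turn $+$ off-diagonals into $-$,
\[
 2T_\ell=\Omega|_{\text{weight }\lambda}-\tfrac{n(n+2)}2\Id .
\]
This sign change is a unitary diagonal conjugation, so it preserves the spectrum.

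\emph{Step 3: Clebsch--Gordan.} We have $V_{2\ell}\otimes V_n\cong\bigoplus_{j=0}^{\min(2\ell,n)}V_{2\ell+n-2j}$ \cite{BrockerTomDieck1985}. The weight $\lambda$ occurs, with multiplicity one, in the summands with $2\ell+n-2j\ge|\lambda|$. Counting these should give exactly $j=0,\dots,m_\ell+1$, consistent with Step 1. On the summand $V_s$ with $s=2\ell+n-2j$, the eigenvalue of $T_\ell$ is
\[
 \tfrac12\Bigl(\tfrac{s(s+2)}2-\tfrac{n(n+2)}2\Bigr)=(\ell-j)(n+\ell-j+1),
\]
by a short expansion. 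These values are distinct for the admissible $j$, since $n+2\ell-2j+1>0$ there. This gives simplicity and \eqref{eq:T-spectrum}.

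The main obstacle is the bookkeeping in Step 2: matching normalizations and signs exactly. This includes
\begin{itemize}
\item the factor $2$ between $\Omega$ and $T_\ell$;
\item which of $A\otimes B$ or $B\otimes A$ produces the upper entry, given the right-action conventions;
\item the identification of $(k-d)(n-k+d+1)$ with the correct $a$ or $b$ coefficient at the correct weight.
\end{itemize}
If a convention mismatch appears, I would first check the endpoint $d=\min(k,\ell)$ against a direct $2\times2$ case (for example $n=1$), where both sides are explicit.
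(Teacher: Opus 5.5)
Your route is genuinely different from the paper's, and it works after one correction in Step~2. The products $\Delta(A)\Delta(B)$ and $\Delta(B)\Delta(A)$ each contain both $A\otimes B$ and $B\otimes A$, so the correct coproduct is
\[
 \Omega_{V_{2\ell}\otimes V_n}=\Omega_{2\ell}\otimes1+1\otimes\Omega_n+H\otimes H+2\bigl(A\otimes B+B\otimes A\bigr),
\]
not the formula you displayed. The factor $2$ on the off-diagonal entries cannot come from ``symmetrizing via $a_{\mu+2}=b_\mu$''. The entry linking $w_d$ to $w_{d+1}$ receives only the $A\otimes B$ contribution $a_{-2d,2\ell}b_{\lambda+2d,n}=\sqrt{(\ell-d)(\ell+d+1)(k-d)(n-k+d+1)}$. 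The identity $a_{\mu+2}=b_\mu$ only shows that the $B\otimes A$ contribution in the transposed position is the same number. Taken literally, your displayed formula gives a matrix whose diagonal is that of $2T_\ell$ (after the shift) but whose off-diagonal entries are those of $T_\ell$ up to sign. That matrix is not conjugate to $2T_\ell$, so Step~2 would fail as written. With the corrected coproduct you get exactly $2T_\ell=S\bigl(\Omega|_\lambda-\tfrac{n(n+2)}2\Id\bigr)S$ with $S=\operatorname{diag}((-1)^d)$. This is the identity recorded, as $\tfrac14(H^2+2AB+2BA-n(n+2)\Id)$, in the Remark after Corollary~\ref{cor:linearly-full-kernel-module}.

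The hedge ``should give exactly'' in Step~3 is unnecessary. The admissible $j$ form an initial segment of $\{0,\dots,\min(2\ell,n)\}$, and each contributes exactly one dimension to the $\lambda$-weight space. By Step~1 there are therefore exactly $m_\ell+2$ of them.

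The paper does not use the Casimir in its proof of this lemma; it triangularizes $T_\ell$ directly. It takes vectors $u_j$ with entries $c_i\binom{i-1}{j}$, where $c_i=(-1)^d\sqrt{\binom{2\ell}{\ell+d}\binom n{k-d}}$. Binomial identities then give $T_\ell u_j=(\ell-j)(n+\ell-j+1)u_j+(\cdots)u_{j-1}$, and simplicity follows from the successive differences $n+2\ell-2j>0$. That argument is self-contained and needs only polynomial identities, but its ansatz is unmotivated on its own. Your argument replaces it with Clebsch--Gordan. It explains why the eigenvalues are the shifted Casimir values $\tfrac14\bigl((s+1)^2-(n+1)^2\bigr)$, and it identifies the zero eigenvalue, when present, with the summand $V_n\subset V_{2\ell}\otimes V_n$ at $j=\ell$. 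The cost is precisely the normalization and sign bookkeeping you flagged, which is where the slip occurred.
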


\begin{lem}[An explicit congruence]
\label{lem:matrix-congruence}
For every $\ell\ge1$, there is an invertible real matrix $P$ such that
\begin{equation}
\label{eq:matrix-congruence}
 P^{\mathrm{T}}T_\ell P
 =\operatorname{diag}\bigl(\ell(\ell+1),\ell(\ell+1)-2,-D_{\lambda,n}M_\ell\bigr).
\end{equation}
The normal block is omitted for $n=1$, and also for $\ell=1$ with
$k\in\{0,n\}$.
For $\ell=1$ and $0<k<n$, it is $-D_{\lambda,n}M_1=(-D_{\lambda,n})$.
\end{lem}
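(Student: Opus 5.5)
The plan is to build $P$ as a product of elementary congruences: a Schur-complement elimination, a $2\times2$ rotation, a second elimination and a diagonal sign/scale change. Each step is read off from the geometric meaning of the rows of $T_\ell$. Row $d$ of $T_\ell$ corresponds to the direction $Z_{\lambda+2d}$ relative to the lift $Z_\lambda$. So $d=0$ is the vertical (lift) direction and $d=\pm1$ span $\C Z_{\lambda\mp2}$. The rows with $|d|\ge2$ correspond to $A^\pm_{|d|}$, with $d<0$ the lower branch and $d>0$ the upper branch.

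\emph{First elimination.} Compute the two off-diagonal entries touching $d=0$ from \eqref{eq:comparison-matrix-entries}. They are $-\sqrt{\ell(\ell+1)}\,a_{\lambda,n}$ (to $d=-1$) and $-\sqrt{\ell(\ell+1)}\,b_{\lambda,n}$ (to $d=1$), using $a_{\lambda,n}^2=(k+1)(n-k)$ and $b_{\lambda,n}^2=k(n-k+1)$. The pivot $(T_\ell)_{d=0,d=0}=\ell(\ell+1)$ is nonzero. A unitriangular congruence therefore splits off the scalar $\ell(\ell+1)$, and the $d=\pm1$ block is modified by $-w w^{\mathrm T}$, where $w=(a_{\lambda,n},b_{\lambda,n})^{\mathrm T}$. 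This supplies the first scalar block.

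\emph{Rotation and second elimination.} Rotate the $(d=-1,d=1)$ coordinates by the orthogonal matrix with columns $w/\sqrt{D_{\lambda,n}}$ and $(b_{\lambda,n},-a_{\lambda,n})^{\mathrm T}/\sqrt{D_{\lambda,n}}$; this mirrors $e_1$ versus $\nu_1$ in \eqref{eq:tangent-frame}--\eqref{eq:first-normal-frame}. I would then check the following:
\begin{itemize}
\item the tangent-like coordinate has diagonal entry $\ell(\ell+1)-2+(\text{couplings})$;
\item the $\nu$-like coordinate gets diagonal $\ell(\ell+1)-2-D_{\lambda,n}$ and couples to $d=\mp2$ through multiples of $b_{\lambda,n}a_{\lambda-2,n}$ and $a_{\lambda,n}b_{\lambda+2,n}$;
\item the tangent coordinate couples to $d=-2$ and $d=2$ with weights proportional to $\alpha/\sqrt{D_{\lambda,n}}$ and $\beta/\sqrt{D_{\lambda,n}}$.
\end{itemize}
Eliminating the tangent coordinate with a second unitriangular congruence should leave the scalar $\ell(\ell+1)-2$. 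Its Schur complement adds rank-one corrections proportional to $(\alpha,\beta)(\alpha,\beta)^{\mathrm T}/D_{\lambda,n}$ on the $d=\pm2$ entries. These are exactly the sources of the $\delta_{r2}\alpha^2/D_{\lambda,n}^2$ and $\delta_{r2}\beta^2/D_{\lambda,n}^2$ diagonal corrections and of the entry $-\alpha\beta/D_{\lambda,n}^2$ in Proposition~\ref{prop:Jacobi-matrices}, after the final rescaling.

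\emph{Matching with $-D_{\lambda,n}M_\ell$.} For $|d|\ge2$, the diagonal $\ell(\ell+1)-d(2d+\lambda)$ is exactly $-D_{\lambda,n}$ times the main part of \eqref{eq:Jacobi-matrix-diagonals}, with $r=|d|$ and the sign of $d$ matching the branch. For the off-diagonals, $\sqrt{(\ell-d)(\ell+d+1)}=\sqrt{\ell(\ell+1)-d(d+1)}$ and $\sqrt{(k-d)(n-k+d+1)}$ equals $a_{\lambda-2r,n}$ or $b_{\lambda+2r,n}$ as appropriate. These agree with \eqref{eq:Jacobi-matrix-edges} up to sign. A final diagonal $\pm1$ congruence fixes the signs (alternating along each branch as needed, given the lower-branch sign convention making the $M_\ell$ entries positive), and the factor $-D_{\lambda,n}$ is already built in. The $\nu$-coordinate row then matches $-D_{\lambda,n}(M_\ell)_{1,\cdot}$, including the diagonal $-(D_{\lambda,n}+2-\ell(\ell+1))$.

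\emph{Degenerate cases and main obstacle.} When $k=0$, only the $d\le0$ rows are present. Row $d=-1$ is then the pure tangent direction, and the elimination of $d=0$ and $d=-1$ should produce the endpoint formulas \eqref{eq:Jacobi-matrix-endpoints}; the $2(n-1)/n$ term is the analogue of the $\alpha^2$ correction. The case $k=n$ is symmetric, and $n=1$ or $\ell=1$ leave only the two scalars, or the scalars and $(-D_{\lambda,n})$. I expect the main obstacle to be the bookkeeping of the second Schur complement. One must verify that the Schur complements and rotation reproduce precisely the $\alpha^2$, $\beta^2$ and $-\alpha\beta$ terms, and that the pivot after the first elimination is exactly $\ell(\ell+1)-2$, which needs $a_{\lambda,n}^2+b_{\lambda,n}^2=D_{\lambda,n}$ and $a_{\lambda,n}^2-b_{\lambda,n}^2=\lambda$. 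I would do this check first and symbolically, since the rest is routine sign matching.
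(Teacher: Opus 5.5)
Your overall plan matches the paper's: split off $x_0$ with pivot $t=\ell(\ell+1)$, rotate $(x_{-1},x_1)$, complete the square in the tangent coordinate with pivot $t-2$, and read off $-D_{\lambda,n}M_\ell$ on the remaining coordinates. However, the rotation you actually write down is wrong, and this is the step that carries the content of the lemma.

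Write $a=a_{\lambda,n}$, $b=b_{\lambda,n}$ and $D=D_{\lambda,n}$. After the first elimination, the $(x_{-1},x_1)$ block is $\operatorname{diag}(t+\lambda-2,\,t-\lambda-2)-ww^{\mathrm T}$ with $w=(a,b)^{\mathrm T}$. Its diagonal part is not scalar, so $w$ is not the relevant direction. Using $\lambda=a^2-b^2$, the block equals $(t-2)I-w'w'^{\mathrm T}$ with $w'=(b,a)^{\mathrm T}$. The correct orthogonal coordinates are therefore $v=(ax_{-1}-bx_1)/\sqrt D$, with eigenvalue $t-2$, and $z=(bx_{-1}+ax_1)/\sqrt D$, with eigenvalue $t-2-D=-D(M_\ell)_{1,1}$. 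These are exactly the coefficient vectors of $e_1$ and $\nu_1$ that you cite, not $w$ and $(b,-a)^{\mathrm T}$. In your coordinates $(p,q)$ the block becomes $(t-2)(p^2+q^2)-(2ab\,p-\lambda q)^2/D$. This has a $pq$ cross term unless $ab\lambda=0$, and its diagonal entries are $t-2-4a^2b^2/D$ and $t-2-\lambda^2/D$. So neither the claimed pivot $t-2$ nor the entry $t-2-D$ is produced.

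With the correct rotation, the coupling of $(v,z)$ to the outer coordinates is $-2\sqrt{(t-2)/D}\,\bigl[v(\alpha x_{-2}-\beta x_2)+z(b\,a_{\lambda-2,n}x_{-2}+a\,b_{\lambda+2,n}x_2)\bigr]$. Completing the square in $v$, which is possible for $\ell\ge2$, therefore subtracts $(\alpha x_{-2}-\beta x_2)^2/D$, not a multiple of $(\alpha x_{-2}+\beta x_2)^2$. Its cross term $+2\alpha\beta\,x_{-2}x_2/D$ is exactly what yields $(M_\ell)_{2,h_\ell+1}=-\alpha\beta/D^2$.

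No final sign congruence is needed. All off-diagonal entries of $T_\ell$ are negative, and so are the within-branch entries and the $z$-couplings of $-DM_\ell$, because the $(-1)^{r-1}$ lower-branch convention is already built into $M_\ell$. Alternating signs along a branch would in fact break the match. Thus $P$ is simply the inverse of $x\mapsto(u,\widehat v,y)$, where $u=x_0-(ax_{-1}+bx_1)/\sqrt t$, $\widehat v=v-(\alpha x_{-2}-\beta x_2)/\sqrt{D(t-2)}$ and $y=(z,(x_{-r})_{r\ge2},(x_r)_{r\ge2})$.
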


\begin{proof}[Proof of Theorem~\ref{thm:introduction-linearly-full}]
For $2\le\ell\le n$, one has $\ell\le m_\ell+1\le n$.
The factor $n+\ell-j+1$ in \eqref{eq:T-spectrum} is positive, so the
sign of each eigenvalue is the sign of $\ell-j$.
Since $D_{\lambda,n}>0$ and both scalar blocks in
\eqref{eq:matrix-congruence} are positive, Sylvester's law of inertia
\cite{HornJohnson2013} gives
\[
 \begin{array}{c|ccc}
 \text{matrix}&\text{positive}&\text{zero}&\text{negative}\\[2pt]\hline
 T_\ell&\ell&1&m_\ell+1-\ell\\[2pt]
 -D_{\lambda,n}M_\ell&\ell-2&1&m_\ell+1-\ell\\[2pt]
 M_\ell&m_\ell+1-\ell&1&\ell-2
 \end{array}
\]
If $\ell>n$, then $j\le m_\ell+1\le n<\ell$, so $T_\ell$ is
positive definite and every nonempty $M_\ell$ is negative definite.
For $\ell=1$, Proposition~\ref{prop:Jacobi-matrices} gives $M_1=(1)$
when $0<k<n$, with no normal block when $k=0,n$.

For $1\le\ell\le n-1$, the number of positive eigenvalues of $M_\ell$ is
\[
 m_\ell+1-\ell
 =\min(k,\ell)-\max\{0,\ell-(n-k)\}.
\]
This is the number of integers $r$ satisfying
$1\le r\le k$ and $r\le\ell<n-k+r$.
By Proposition~\ref{prop:Jacobi-matrices}, reversing the order of summation gives
\[
 \frac12\Ind(\phi_{\lambda,n})
 =\sum_{r=1}^{k}\sum_{\ell=r}^{n-k+r-1}(2\ell+1)
 =\sum_{r=1}^{k}\bigl((n-k+r)^2-r^2\bigr)
 =(n+1)k(n-k).
\]
The zero eigenvalue occurs once in each block with $2\le\ell\le n$, so
\[
 \Nul(\phi_{\lambda,n})
 =2\sum_{\ell=2}^{n}(2\ell+1)
 =2\bigl((n+1)^2-4\bigr)
 =2(n-1)(n+3).
\]
The sums are empty for $n=1$.
\end{proof}

For each $2\le\ell\le n$, fix a nonzero vector $c\in\ker M_\ell$.
Keeping $c$ fixed and varying $j=1,\ldots,2\ell+1$ in the construction of
\eqref{eq:multiplicity-ordering} gives normal Jacobi fields whose complex span
is isomorphic to $V_{2\ell}$, by the right-translation formula in
Section~\ref{subsec:coefficient-modes}.
Since $\dim_\C\ker M_\ell=1$ and no other blocks contribute zero modes,
these copies exhaust the normal Jacobi kernel.

\begin{cor}[Normal Jacobi kernel in $\CP^n$]
\label{cor:linearly-full-kernel-module}
With the complex coefficient convention of
Section~\ref{subsec:normal-connection}, the normal Jacobi kernel has the
$\SU(2)$-module decomposition
\[
 \ker J\cong\bigoplus_{\ell=2}^n V_{2\ell}.
\]
\end{cor}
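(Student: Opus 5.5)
The plan is to read the decomposition off the block structure of Proposition~\ref{prop:Jacobi-matrices}, combined with the inertia table from the proof of Theorem~\ref{thm:introduction-linearly-full}. The first step is the identity recorded in Proposition~\ref{prop:Jacobi-matrices},
\[
 \ker J=\widehat{\bigoplus_{\ell}}\;\bigl\{\textstyle\sum_p c_pE_p^{(\ell,j)}:\ c\in\ker M_\ell,\ 1\le j\le 2\ell+1\bigr\},
\]
which holds because $J$ preserves each finite-dimensional space spanned by the $E_p$ for fixed $(\ell,j)$. These spaces are mutually $L^2$-orthogonal and their Hilbert sum is all of $L^2(N_{\phi_{\lambda,n}}S^2)$. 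An $L^2$ kernel element therefore has each component in the kernel of the corresponding block. Kernel elements are smooth by elliptic regularity, so no completion issue arises. By the inertia count, $\ker M_\ell=0$ for $\ell=1$ and for $\ell>n$, and $\dim_\C\ker M_\ell=1$ for $2\le\ell\le n$. So only the blocks with $2\le\ell\le n$ contribute, each with a single line spanned by some $c^{(\ell)}$.

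The second step is to show that, for fixed $\ell$ and fixed $c=c^{(\ell)}$, the map
\[
 \Phi_\ell:V_{2\ell}\to\ker J,\qquad \Phi_\ell(\text{$j$th weight vector})=\sum_p c_pE_p^{(\ell,j)},
\]
is an $\SU(2)$-equivariant complex-linear isomorphism onto its image. Complex linearity uses the coefficient complex structure: multiplying $V$ by $\mathrm{i}$ multiplies every normal coefficient by $\mathrm{i}$, which corresponds to replacing $c$ by $\mathrm{i}c$, or equivalently to taking complex combinations over $j$. Equivariance follows from the right-translation formula
\[
 Y^j_{r,2\ell}(gh^{-1})=\sum_q\rho_{2\ell}(h)_{jq}Y^q_{r,2\ell}(g).
\]
The key point is that this matrix $\rho_{2\ell}(h)_{jq}$ is independent of $r$. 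It can therefore be pulled out uniformly across all coefficients $A_1,A_r^\pm$, including the sign factors $(-1)^{r-1}$.

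One also needs the induced action on normal fields to match the right translation of coefficients. This holds because the frame vectors $Z_\mu(g)=v_{\mu,n}\rho_n(g)$ transform compatibly under the ambient isometry $\rho_n(h)$, so the $\SU(2)$-action on normal fields reduces to right translation on the coefficient functions.

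Injectivity of $\Phi_\ell$ follows from Schur orthogonality, since the $Y^j$ are nonzero and orthogonal. Summing over $\ell$ then gives $\ker J\cong\bigoplus_{\ell=2}^nV_{2\ell}$, consistent with the count $\dim_\R=2\sum_{\ell=2}^n(2\ell+1)$ in Theorem~\ref{thm:introduction-linearly-full}.

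I expect the main obstacle to be the compatibility just described: checking carefully that the geometric $\SU(2)$-action on normal fields, given by $V\mapsto \rho_n(h)\circ V\circ R_{h}^{-1}$, becomes exactly right translation $f\mapsto f(\cdot\,h^{-1})$ on the coefficients. The coefficients themselves are defined with a conjugation on the lower branch, so the verification has to go through the definitions of $A_r^\pm$ as inner products with $Z_{\lambda\pm2r}$. Since $\langle V\rho_n(h),Z_\mu(gh)\rangle=\langle V,Z_\mu(g)\rangle$ by unitarity, this should reduce to a routine check.
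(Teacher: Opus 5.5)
Your proposal is correct and follows essentially the paper's argument: for $2\le\ell\le n$ the inertia table gives $\dim_\C\ker M_\ell=1$, and all other blocks have trivial kernel. Fixing $c\in\ker M_\ell$ and varying $j$ then yields a copy of $V_{2\ell}$ by the right-translation formula. You additionally spell out two points the paper leaves implicit: why the geometric $\SU(2)$-action on normal fields reduces to right translation of the coefficients, and why no completion or regularity issue arises in the exhaustion step.
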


\begin{remark}
Let $H,A,B$ act on $V_n\otimes V_{2\ell}$ as the sums of their actions
on the two factors.  On the weight-$\lambda$ subspace, the shifted Casimir
operator $\tfrac14(H^2+2AB+2BA-n(n+2)\Id)$ has matrix
$T_\ell$ in the orthonormal basis
\[
 (-1)^d v_{\lambda+2d,n}\otimes v_{-2d,2\ell},\qquad
 -h_\ell\le d\le\min(k,\ell).
\]
Indeed, the ladder identities \eqref{eq:ladder-actions-section2} give
exactly the entries in \eqref{eq:comparison-matrix-entries}, with the
phase $(-1)^d$ producing the negative off-diagonal entries.
\end{remark}

\subsection{Totally geodesic extensions}
\label{subsec:extensions}

\begin{proof}[Proof of Theorem~\ref{thm:introduction-higher-codimension}]
Fix $N\ge n$ and consider the standard totally geodesic inclusion
$\CP^n\subset\CP^N$.  Using the chosen unitary basis, write
\[
 \C^{N+1}=\C^{n+1}\oplus\C^{N-n}
\]
and extend $\rho_n$ trivially on the second factor.  The same projective orbit gives
$\phi_{\lambda,n}^{N}$.  Let $J^N$ be its normal Jacobi operator, so $J^n=J$.

Let $e_{n+2},\ldots,e_{N+1}$ be the standard orthonormal basis of the
complementary factor $\C^{N-n}$.  The lift $Z_\lambda$ and all tangent
representatives lie in $\C^{n+1}$, so these complementary directions are
normal.  With our normalization of the Fubini--Study metric,
$g_{\mathrm{FS}}(e_\alpha,e_\alpha)=4$, so the representatives
$e_\alpha/2$ have unit length.  Relative to $Z_\lambda$, every real normal field along
$\phi_{\lambda,n}^{N}$ has a unique horizontal representative
\[
 V=V_0+\frac12\sum_{\alpha=n+2}^{N+1}f_\alpha e_\alpha,
\]
where $V_0$ represents a normal field along $\phi_{\lambda,n}$ in $\CP^n$
and the $f_\alpha$ are complex-valued functions on $\SU(2)$.
Since $Z_\lambda(t_ug)=e^{\lambda\mathrm{i}u}Z_\lambda(g)$, the horizontal
representative of the same normal vector acquires the same phase.
The vectors $e_\alpha$ are fixed, so
\[
 f_\alpha(t_ug)=e^{\lambda\mathrm{i}u}f_\alpha(g).
\]
Comparing this with the defining equivariance of $L_r$, we obtain
$f_\alpha\in\Gamma(L_{-\lambda/2})$ and the orthogonal splitting
\begin{equation}
\label{eq:higher-normal-splitting}
 N_{\phi_{\lambda,n}^{N}}S^2\simeq N_{\phi_{\lambda,n}}S^2\oplus(L_{-\lambda/2})^{\oplus(N-n)}.
\end{equation}
This is a splitting of real vector bundles, with each complex line viewed
as a real plane.  The original normal summand retains the coefficient
complex structure of Section~\ref{subsec:normal-connection}, while the
additional coefficients use ordinary complex multiplication.

Since the vectors $e_\alpha$ are constant, differentiating an additional
component and subtracting the connection term of $Z_\lambda$ gives, in
horizontal representatives,
\[
 \nabla^\perp\!\left(\frac12f_\alpha e_\alpha\right)
 =\frac12\bigl(\dd f_\alpha-\lambda\mathrm{i}\omega f_\alpha\bigr)e_\alpha.
\]
Thus the additional summands are preserved by the normal connection;
total geodesy also preserves the original normal summand.
On the latter, $J^N$ agrees with $J$.
The vectors $\II(e_i,e_j)$ and $\mathrm{i}e_i$, $i,j=1,2$, lie in $T\CP^n$
and are orthogonal to the additional normal directions.  Hence
$\mathcal B=0$ on these directions, and the Fubini--Study curvature
formula gives $\mathcal R=\tfrac12\Id$.  The second variation formula gives
\begin{equation}
\label{eq:additional-Jacobi-eigenvalue}
 J^{N}=J\oplus
           \left(\Delta+\tfrac12\Id\right)^{\oplus(N-n)}.
\end{equation}
Here $\Delta$ is the connection Laplacian on $L_{-\lambda/2}$ for the induced
metric.  Equation~\eqref{eq:coefficient-modes} decomposes
$L^2(L_{-\lambda/2})$ into summands isomorphic to $V_s$, where
$s\ge|\lambda|$ and $s\equiv n\pmod2$.
The summand corresponding to $s$ has orthogonal basis
$Y_{-\lambda/2,s}^{j}$, $1\le j\le s+1$.
For every $f$ in such a summand, \eqref{eq:line-bundle-spectrum} gives
\begin{equation}
\label{eq:additional-mode-spectrum}
 \left(\Delta+\tfrac12\Id\right)f
 =\frac{(n-s)(n+s+2)}{4D_{\lambda,n}}f,\qquad
 s\ge|\lambda|,\quad s\equiv n\pmod2.
\end{equation}
Here we used $\lambda^2+2D_{\lambda,n}=n(n+2)$.
By completeness, this gives the full spectrum of
$\Delta+\tfrac12\Id$ on sections of $L_{-\lambda/2}$, with complex
multiplicity $s+1$.  The eigenvalues are
positive for $s<n$, zero for $s=n$ and negative for $s>n$.

Each additional summand $L_{-\lambda/2}$ therefore contributes
\[
 2\sum_{\substack{|\lambda|\le s<n\\s\equiv n\ (2)}}(s+1)
 =2\frac{n^2-\lambda^2}{4}=2k(n-k)
\]
to the real index and $2(n+1)$ to the real nullity.
Adding the $N-n$ contributions to the linearly full formulas gives
\[
 \Ind_{\CP^N}(\phi_{\lambda,n}^{N})=2k(n-k)(N+1),\qquad
 \Nul_{\CP^N}(\phi_{\lambda,n}^{N})=2\bigl((N+1)(n+1)-4\bigr).
\]
\end{proof}

On each additional line bundle $L_{-\lambda/2}$,
\eqref{eq:additional-mode-spectrum} gives
\[
 \ker\bigl(\Delta+\tfrac12\Id\bigr)
 =\operatorname{span}_\C\{Y_{-\lambda/2,n}^{j}:1\le j\le n+1\}
 \simeq V_n.
\]
Combining the splitting \eqref{eq:additional-Jacobi-eigenvalue} with
Corollary~\ref{cor:linearly-full-kernel-module} and the preceding kernel identity
gives the following decomposition.

\begin{cor}[Normal Jacobi kernel]
\label{cor:higher-codimension-kernel-module}
With the complex coefficient conventions above, the normal Jacobi
kernel has the $\SU(2)$-module decomposition
\begin{equation}
\label{eq:normal-kernel-module}
 \ker J^{N}
 \simeq\bigoplus_{\ell=2}^nV_{2\ell}\oplus V_n^{\oplus(N-n)}.
\end{equation}
\end{cor}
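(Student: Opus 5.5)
The decomposition follows by combining the block splitting \eqref{eq:additional-Jacobi-eigenvalue} with the two kernel computations already available.

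\textbf{Step 1 (the splitting is equivariant).} First I will check that the orthogonal splitting \eqref{eq:higher-normal-splitting} is $\SU(2)$-equivariant, so that
\[
\ker J^N=\ker J\oplus\ker\bigl(\Delta+\tfrac12\Id\bigr)^{\oplus(N-n)}
\]
holds as $\SU(2)$-modules. The action is by right translation $(f\cdot h)(g)=f(gh^{-1})$ on the coefficient functions. The orbit map intertwines this action with the ambient isometry $\rho_n(h)\oplus\Id$. That isometry fixes each constant vector $e_\alpha$ and preserves $\C^{n+1}$. Hence right translation acts on the $f_\alpha$ exactly as on sections of $L_{-\lambda/2}$, and it preserves the original normal summand.

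Since $J^N$ commutes with these isometries, the block decomposition \eqref{eq:additional-Jacobi-eigenvalue} respects the action. Its kernel is therefore the direct sum of the kernels of the blocks. The blocks are:
\begin{itemize}
\item the original block $J$;
\item $N-n$ copies of $\Delta+\tfrac12\Id$, one on each $L_{-\lambda/2}$.
\end{itemize}

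\textbf{Step 2 (the original block).} By Corollary~\ref{cor:linearly-full-kernel-module}, the kernel of $J$ is $\bigoplus_{\ell=2}^nV_{2\ell}$ in the coefficient complex structure.

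\textbf{Step 3 (the additional blocks).} For each additional copy I use \eqref{eq:additional-mode-spectrum}. It shows that $\Delta+\tfrac12\Id$ acts on the $s$-summand of \eqref{eq:coefficient-modes} by the scalar $(n-s)(n+s+2)/(4D_{\lambda,n})$. This scalar vanishes only for $s=n$. By completeness of \eqref{eq:coefficient-modes}, the kernel is exactly the span of $Y^j_{-\lambda/2,n}$, $1\le j\le n+1$. By the right-translation formula, these functions transform as the weight basis of $V_n$. The admissibility conditions $n\ge|\lambda|$ and $n\equiv-\lambda\pmod 2$ hold, so this summand is present.

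\textbf{Main obstacle: compatibility of complex structures.} The only point needing care is that the two families of summands carry different complex structures. The original summand uses the coefficient structure, which conjugates the lower branch. The additional summands use ordinary multiplication by $\mathrm{i}$. Since the module statement is phrased ``with the complex coefficient conventions above'', each summand is taken with its own structure. In each case the $\SU(2)$ action is complex-linear for that structure: the coefficients transform through the functions $Y$ by the matrices $\rho_s(h)$. So the direct sum is a complex $\SU(2)$-module, and Steps 2 and 3 together give \eqref{eq:normal-kernel-module}.

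As a sanity check, the real dimension of the right-hand side is
\[
2\sum_{\ell=2}^n(2\ell+1)+2(n+1)(N-n)=2\bigl((N+1)(n+1)-4\bigr),
\]
which matches the nullity in Theorem~\ref{thm:introduction-higher-codimension}.
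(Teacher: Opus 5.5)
Your proposal is correct and follows essentially the same route as the paper, which likewise combines the block splitting \eqref{eq:additional-Jacobi-eigenvalue} with Corollary~\ref{cor:linearly-full-kernel-module} and reads off $\ker(\Delta+\tfrac12\Id)=\operatorname{span}_\C\{Y^j_{-\lambda/2,n}:1\le j\le n+1\}\simeq V_n$ on each additional line bundle from \eqref{eq:additional-mode-spectrum}. Your explicit checks of the $\SU(2)$-equivariance of the splitting and of the two complex structures make precise what the paper leaves implicit.
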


\subsection{Integrability of normal Jacobi fields}
\label{subsec:integrable-nullity}

We prove Theorem~\ref{thm:introduction-integrable-nullity} by deforming the
holomorphic directrix and comparing the dimension of the resulting space
of normal variational fields with the nullity in
Theorem~\ref{thm:introduction-higher-codimension}.

For a smooth map $f:S^2\to\CP^N$, choose a local nonzero lift $s$ in a
holomorphic coordinate $z$.  Its Gauss transforms are
\cite{LemaireWood2002}
\[
 \partial^{\prime}(f)\coloneqq\left[\partial_zs-
 \frac{\langle\partial_zs,s\rangle_\C}{\langle s,s\rangle_\C}s\right],\qquad
 \partial^{\prime\prime}(f)\coloneqq\left[\partial_{\bar z}s-
 \frac{\langle\partial_{\bar z}s,s\rangle_\C}{\langle s,s\rangle_\C}s\right].
\]
These expressions are independent of the lift and holomorphic coordinate
where the projected derivatives are nonzero.  Let
$P_0:V_n\hookrightarrow\C^{N+1}$ be the standard inclusion,
$P_0(v)=(v,0)$.  For a complex-linear injection $P$ near $P_0$, start
with the holomorphic curve
\[
 \phi_{n,n}^{N}(P):[g]\longmapsto[P(Z_n(g))].
\]
Applying $\partial^{\prime}$ successively defines
\begin{equation}
\label{eq:directrix-deformation-family}
 \phi_{\mu-2,n}^{N}(P)\coloneqq\partial^{\prime}(\phi_{\mu,n}^{N}(P)),
 \qquad \mu=n,n-2,\ldots,2-n.
\end{equation}
Thus, for $\lambda=n-2k$, we have
$\phi_{\lambda,n}^{N}(P)=(\partial^{\prime})^k(\phi_{n,n}^{N}(P))$.

\begin{lem}[Variations of the directrix]
\label{lem:directrix-family}
Let $N\ge n\ge1$, $0\le k\le n$ and $\lambda=n-2k$.
For $P$ near $P_0$, the maps $\phi_{\lambda,n}^{N}(P)$ are minimal
immersions conformal for the fixed complex structure on $S^2$.
The map $(P,x)\mapsto\phi_{\lambda,n}^{N}(P)(x)$ is smooth, and
$\phi_{\lambda,n}^{N}(P_0)=\phi_{\lambda,n}^{N}$.

For $\dot P\in\operatorname{Hom}_\C(V_n,\C^{N+1})$, set
$P_t\coloneqq P_0+t\dot P$ for real $t$ near zero and define
\begin{equation}
\label{eq:matrix-full-velocity}
 U_{\dot P}\coloneqq\left.\frac{\dd}{\dd t}\right|_0
       \phi_{\lambda,n}^{N}(P_t).
\end{equation}
The set
\[
 W\coloneqq\{U_{\dot P}:\dot P\in\operatorname{Hom}_\C(V_n,\C^{N+1})\}
\]
is a real vector space with $\dim_\R W=2(N+1)(n+1)-2$.
\end{lem}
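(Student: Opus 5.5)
The plan is to realize the family $\phi_{\lambda,n}^{N}(P)$ as the harmonic sequence of a holomorphic rational normal curve. The first part of Lemma~\ref{lem:directrix-family} then follows from Eells--Wood theory together with an openness argument. The dimension count becomes a linear-algebra statement about which $\dot P$ produce zero velocity.

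\emph{Minimality and smoothness.} For an injective $P$, the curve $[g]\mapsto[P(Z_n(g))]$ is holomorphic, since $Z_n$ is a holomorphic lift of the Veronese curve up to a phase. Its image is a rational normal curve spanning the $(n+1)$-dimensional subspace $P(V_n)$. I will write its $k$th Gauss transform explicitly. In a holomorphic coordinate $z$, take a holomorphic lift $s_P=P\circ s$, where $s$ is the standard polynomial lift of the Veronese curve. Then $(\partial')^k$ of the curve is the line spanned by the component of $\partial_z^k s_P$ orthogonal to $\operatorname{span}_\C\{s_P,\ldots,\partial_z^{k-1}s_P\}$. Because $P$ is linear and injective, these osculating spaces are $P$-images of those of the Veronese curve. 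They therefore have constant rank $k+1$ for $k\le n$, so each Gauss transform is well defined everywhere. This handles the coordinate charts at $0$ and $\infty$ separately, and the transforms are independent of the chart as recalled above. The Gram--Schmidt formula depends smoothly on $(P,z)$, which gives the smoothness of $(P,x)\mapsto\phi_{\lambda,n}^{N}(P)(x)$.

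By Eells--Wood \cite{EellsWood1983}, every member of the harmonic sequence of a holomorphic curve is a conformal harmonic map, hence a weakly conformal minimal branched map. At $P=P_0$ we recover $\phi_{\lambda,n}^{N}$, because the sequence \eqref{eq:directrix-deformation-family} reproduces the Veronese sequence. Being an immersion is an open condition on the $C^1$ jet over the compact $S^2$, and the jet varies continuously in $P$. Hence $\phi_{\lambda,n}^{N}(P)$ is an unbranched minimal immersion for $P$ near $P_0$.

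\emph{Dimension of $W$.} The map $\dot P\mapsto U_{\dot P}$ is real-linear on $\operatorname{Hom}_\C(V_n,\C^{N+1})$, which has real dimension $2(N+1)(n+1)$. It therefore suffices to show that its kernel is exactly $\C\,P_0$.

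The inclusion $\C P_0\subset\ker$ is immediate. For $c\in\C$, $P_t=(1+tc)P_0$ defines the same projective curve for every small $t$, so all of its Gauss transforms are unchanged.

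For the converse, I will use the Eells--Wood inversion $\partial''\partial'f=f$ along a harmonic sequence. This gives $\phi_{n,n}^{N}(P)=(\partial'')^k\phi_{\lambda,n}^{N}(P)$ for all $P$ near $P_0$. Differentiating in $t$ by the chain rule then shows that the directrix velocity vanishes whenever $U_{\dot P}=0$. The step needing most care is justifying this: I must show that $(\partial'')^k$, applied to the smooth family of nondegenerate harmonic maps, is a smooth operation on maps. I would use the same Gram--Schmidt expression with $\bar z$-derivatives, noting that the antiholomorphic osculating ranks stay constant near $P_0$.

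Vanishing of the directrix velocity means $\dot P(Z_n(g))\in\C\,P_0(Z_n(g))$ for all $g$. Write $\dot P=P_0A+R$, where $A\in\operatorname{End}(V_n)$ and $R$ takes values in $\C^{N-n}$. Then $R(Z_n(g))=0$ for all $g$. Since the rational normal curve spans $V_n$, this forces $R=0$. Moreover every point of the curve is an eigenvector of $A$.

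It remains to show $A$ is a scalar. If $A$ had two distinct eigenvalues, the curve would lie in the finite union of the eigenspaces of $A$, each a proper linear subspace. By connectedness and irreducibility, the curve would then lie in a single proper subspace, contradicting linear fullness. Hence $A=c\Id$ and the kernel is $\C P_0$. This gives $\dim_\R W=2(N+1)(n+1)-2$.
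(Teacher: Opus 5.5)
Your proposal is correct and follows essentially the paper's route: harmonic-sequence theory with openness of the immersion condition for the first part, and for the kernel the same reduction, via $\partial''\partial'f=f$ and the chain rule, to vanishing of the directrix velocity, with your eigenvector argument for $A$ replacing the paper's comparison of the first and $(n+1)$-st coordinates of $\dot P(s(z))=a(z)P_0(s(z))$. One small fix there: do not assume $A$ has two distinct eigenvalues; instead argue that irreducibility puts the whole curve in a single eigenspace, which linear fullness forces to be $V_n$, and this also covers a non-diagonalizable $A$ with only one eigenvalue.
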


\begin{proof}
For $\mu=n,n-2,\ldots,-n$, equation~\eqref{eq:dZ-mu} gives
\[
 \partial^{\prime}(\phi_{\mu,n}^{N})=\phi_{\mu-2,n}^{N}\quad(\mu>-n),
 \qquad
 \partial^{\prime\prime}(\phi_{\mu,n}^{N})=\phi_{\mu+2,n}^{N}\quad(\mu<n).
\]
The projected derivatives defining these transforms are nowhere zero
in the indicated ranges.  Hence the recursion
\eqref{eq:directrix-deformation-family} at $P_0$ recovers the Veronese
sequence.  Smoothness of the local Gauss formulas, compactness of $S^2$
and openness of the immersion condition give a smooth family of
immersions for $P$ near $P_0$.  These maps form the harmonic sequence
generated by the holomorphic curve $\phi_{n,n}^{N}(P)$
\cite{EellsWood1983,Wolfson1988}.
Each member has vanishing Hopf differential on $S^2$, hence is conformal
for the fixed complex structure and minimal.

The map $\dot P\mapsto U_{\dot P}$ is real-linear by smooth dependence.
To compute $\dim_\R W$, we first prove that
\begin{equation}
\label{eq:full-velocity-kernel}
 U_{\dot P}=0\quad\Longleftrightarrow\quad\dot P\in\C P_0.
\end{equation}
For $c\in\C$, the family $P_t=(1+tc)P_0$ leaves the directrix unchanged,
so $U_{cP_0}=0$.

Conversely, suppose $U_{\dot P}=0$.  Adjacent members of the deformed
harmonic sequence satisfy \cite{BurstallWood1986,ChernWolfson1987}
\begin{equation}
\label{eq:inverse-Gauss-transform}
 \partial^{\prime\prime}(\phi_{\mu,n}^{N}(P_t))=\phi_{\mu+2,n}^{N}(P_t),
 \qquad \mu=\lambda,\lambda+2,\ldots,n-2.
\end{equation}
The projected derivatives defining $\partial^{\prime\prime}$ remain
nonzero near $P_0$, so its local formula depends smoothly on the map
coordinates and their first spatial derivatives.  If a variational field
vanishes identically, so do the spatial derivatives of its coordinate
components.  The chain rule therefore shows that applying
$\partial^{\prime\prime}$ gives a family whose variational field also
vanishes.  Applying \eqref{eq:inverse-Gauss-transform} $k$ times yields
\[
 \left.\frac{\dd}{\dd t}\right|_{t=0}\phi_{n,n}^{N}(P_t)=0.
\]

In the affine coordinate $z=\eta/\zeta$, the directrix has lift $P_t(s(z))$,
where
\[
 s(z)\coloneqq\sum_{q=0}^n\sqrt{\binom nq}\,z^qv_{n-2q,n}.
\]
The directrix velocity vanishes precisely when
\[
 \dot P(s(z))=a(z)P_0(s(z))
\]
for some function $a$.  The first and $(n+1)$-st coordinates give
\[
 a(z)=\bigl(\dot P(s(z))\bigr)_1,\qquad
 z^na(z)=\bigl(\dot P(s(z))\bigr)_{n+1}.
\]
Both right-hand sides are polynomials of degree at most $n$, so $a(z)=c$
is constant.  Comparing coefficients yields $\dot P=cP_0$, proving
\eqref{eq:full-velocity-kernel}.  Since the kernel has real dimension two,
$\dim_\R W=2(N+1)(n+1)-2$.
\end{proof}

\begin{proof}[Proof of Theorem~\ref{thm:introduction-integrable-nullity}]
Let $W$ be the space of variational fields in
Lemma~\ref{lem:directrix-family}.  Every $U\in W$ is the velocity of a
family of conformal minimal immersions, so $U^\perp\in\ker J^N$
\cite{Simons1968}.  Consider the normal projection
\[
 p:W\longrightarrow\ker J^N,\qquad p(U)\coloneqq U^\perp.
\]
By Lemma~\ref{lem:directrix-family} and
Theorem~\ref{thm:introduction-higher-codimension},
$\dim_\R W=\dim_\R\ker J^N+6$, so it suffices to show that
$\dim_\R\ker p\le6$.
For $U\in\ker p$, choose a family $\psi_t$ from the lemma with velocity
$U$, and set $g_0=\psi_0^*g_{\mathrm{FS}}$.
Since $U$ is tangential and $\psi_0$ is an immersion, write
$U=\dd\psi_0(X)$ for a unique vector field $X$ on $S^2$.
Conformality for the fixed complex structure gives
$\psi_t^*g_{\mathrm{FS}}=e^{2u_t}g_0$ with $u_0=0$, and hence
\[
 \mathcal L_Xg_0
 =\left.\frac{\dd}{\dd t}\right|_0\psi_t^*g_{\mathrm{FS}}
 =2\dot u_0g_0.
\]
Thus $X$ is conformal.  The map $U\mapsto X$ is injective, and the space
of conformal vector fields on $S^2$ has real dimension six
\cite[Section~6]{LemaireWood2002}.  Hence $\dim_\R\ker p\le6$.
Consequently,
\begingroup
\begin{equation}
\label{eq:normal-velocity-dimension}
 \dim_\R p(W)=\dim_\R W-\dim_\R\ker p
 \ge\dim_\R W-6=\dim_\R\ker J^N.
\end{equation}
\endgroup
Since $p(W)\subseteq\ker J^N$, the map $p$ is surjective.
For any $V\in\ker J^N$, choose $\dot P$ with $U_{\dot P}^\perp=V$.
Then $\psi_t=\phi_{\lambda,n}^{N}(P_0+t\dot P)$, for sufficiently small
$t$, is the required smooth family of minimal immersions.
\end{proof}

\begin{remark}
The matrix directions in $\operatorname{Hom}_\C(V_n,\C^{N-n})$ move the
original $\CP^n$ inside $\CP^N$; their real dimension $2(N-n)(n+1)$
equals the additional normal nullity.
The integrating families remain minimal but need not be homogeneous or
have constant curvature.
\end{remark}

\appendix
\section{Matrix calculations}
\label{app:matrix-calculations}

We first prove Proposition~\ref{prop:Jacobi-matrices}, then
Lemmas~\ref{lem:comparison-spectrum} and~\ref{lem:matrix-congruence}.

\begin{proof}[Proof of Proposition~\ref{prop:Jacobi-matrices}]
Write $D=D_{\lambda,n}$, $a=a_{\lambda,n}$ and $b=b_{\lambda,n}$.
Thus $D=a^2+b^2$ and $\lambda=a^2-b^2$.
For a normal field $V$ with expansion
\eqref{eq:normal-field-complex-coordinates}, the tangent and normal
frames give, when $0<k<n$,
\[
 \sum_{i=1}^2\langle V,\mathrm{i}e_i\rangle^2
 =\frac{4a^2b^2}{D^2}|A_1|^2.
\]
The sum vanishes when $k=0,n$.  The Fubini--Study curvature formula is
\[
 \langle\mathcal R V,V\rangle
 =\tfrac12|V|^2+\tfrac34\sum_{i=1}^2\langle V,\mathrm{i}e_i\rangle^2.
\]
Together with \eqref{eq:II-frame-formula}, this yields
\begin{equation}
\label{eq:curvature-shape-coefficients}
 D\langle\mathcal B V,V\rangle=\frac{|\alpha A_2^-+\beta A_2^+|^2}{2D},\qquad
 D\langle\mathcal R V,V\rangle=\frac{D}{2}|V|^2+\frac{3a^2b^2}{D}|A_1|^2.
\end{equation}
These quadratic forms show that $\mathcal B,\mathcal R$ are
complex-linear and preserve the line index.
Absent normal coefficients are set to zero in these quadratic forms.

Fix $\ell,j$ with $m_\ell>0$.  We now restrict
\eqref{eq:Jacobi-differential-coefficients} to the corresponding normal
coefficient block.
Let $C_\ell$ be the principal submatrix of $C$ on the coordinates
of line index at most $\ell$, and let $Z_\ell$ be the corresponding
principal submatrix of
\[
 -\frac{C^{\mathrm T}C+CC^{\mathrm T}}{2D}+\mathcal B+\mathcal R.
\]
Both products are computed on the full normal coefficient space before
restriction, so a coupling from $r=\ell$ to $r=\ell+1$ still contributes
to the diagonal.  In the coefficient order of
Section~\ref{subsec:normal-connection}, write a vector in this block as
$x=(x_1,(x_r^-)_{r=2}^{h_\ell},(x_r^+)_{r=2}^{\min(k,\ell)})^{\mathrm T}$.
The central coordinate $x_1$ is omitted when $k=0,n$.
In the formulas below, every coordinate absent from this block is
set to zero.  We have
\[
 D\,x^*Z_\ell x=(D+1)|x_1|^2
 +\sum_{r=2}^{h_\ell}r(r-\lambda)|x_r^-|^2
 +\sum_{r=2}^{\min(k,\ell)}r(r+\lambda)|x_r^+|^2
 +\frac{|\alpha x_2^-+\beta x_2^+|^2}{D}.
\]
To verify this identity, first suppose $0<k<n$.  In the full coefficient
order $A_1,A_2^-,\ldots,A_{n-k}^-,A_2^+,\ldots,A_k^+$,
\eqref{eq:normal-connection-components} gives
\[
 \begin{aligned}
 (C^{\mathrm T}C)_{ij}&=0 &&(i\ne j),\\[3pt]
 (C^{\mathrm T}C+CC^{\mathrm T})_{r,r}
 &=D+2r\lambda-2r^2-\frac{\delta_{r2}\alpha^2}{D}
 &&(2\le r\le n-k),\\[3pt]
 (C^{\mathrm T}C+CC^{\mathrm T})_{n-k+r-1,n-k+r-1}
 &=D-2r\lambda-2r^2-\frac{\delta_{r2}\beta^2}{D}
 &&(2\le r\le k).
 \end{aligned}
\]
The $r=2$ corrections come from the $A_1$-terms in
\eqref{eq:normal-connection-components}.
\begingroup
For the lower branch, when $n-k\ge2$,
\[
 (C^{\mathrm T}C+CC^{\mathrm T})_{2,2}
 =\frac{b^2a_{\lambda-2,n}^2}{D}+a_{\lambda-4,n}^2
 =D+4\lambda-8-\frac{\alpha^2}{D}.
\]
\endgroup
For the central diagonal entry,
\[
 (C^{\mathrm T}C+CC^{\mathrm T})_{1,1}
 =\frac{b^2a_{\lambda-2,n}^2+a^2b_{\lambda+2,n}^2}{D}
 =\frac{6a^2b^2-D^2-2D}{D}.
\]
Up to symmetry, the only off-diagonal entry of $CC^{\mathrm T}$ is
\[
 (CC^{\mathrm T})_{2,n-k+1}=-\frac{\alpha\beta}{D},
\]
when both branches contain an $r=2$ coordinate.  If both occur in the
$\ell$-block, \eqref{eq:curvature-shape-coefficients} gives
\[
 (Z_\ell)_{2,h_\ell+1}
 =-\frac{-\alpha\beta/D}{2D}+\frac{\alpha\beta}{2D^2}
 =\frac{\alpha\beta}{D^2}.
\]
Together with \eqref{eq:curvature-shape-coefficients}, these entries give
the quadratic form $D\,x^*Z_\ell x$ above.  At $k=0$ or
$k=n$, the incoming coupling at $r=2$ is absent; the corresponding ladder
coefficient satisfies $a_{n-2,n}^2=2(n-1)$ or $b_{-n+2,n}^2=2(n-1)$,
respectively.  Thus the same identity holds with $D=n$ and, respectively,
$\alpha^2=2n(n-1)$ or $\beta^2=2n(n-1)$.

Let $r_p$ be the line index of coordinate $p$ in this block, and put
\[
 B_\ell\coloneqq\operatorname{diag}(b_{2r_p,2\ell})_{p=1}^{m_\ell}.
\]
The basis $E_1,\ldots,E_{m_\ell}$ differs from the coefficient basis
with entries $Y_{r_p,2\ell}^j$ only by the diagonal sign matrix $S_\ell$,
whose entries are $(-1)^{r-1}$ on the lower branch and $1$ elsewhere.
If $(C_\ell)_{pq}\ne0$, then $r_p=r_q+1$ and
$a_{2r_p,2\ell}=b_{2r_q,2\ell}$.
Consequently, in the coefficient basis before applying $S_\ell$,
the two first-order terms in
\eqref{eq:Jacobi-differential-coefficients} are represented by
$-C_\ell B_\ell/D$ and $-B_\ell C_\ell^{\mathrm T}/D$.
Using \eqref{eq:line-bundle-spectrum} and the ladder identities
\eqref{eq:coefficient-ladder} with the coefficients
\eqref{eq:normal-mode-coefficients}, we obtain
\[
 M_\ell=S_\ell\left[
 \frac{\operatorname{diag}(r_p^2-\ell(\ell+1))_{p=1}^{m_\ell}
       -C_\ell B_\ell-B_\ell C_\ell^{\mathrm T}}{D}
 +Z_\ell\right]S_\ell.
\]
The diagonal and adjacent entries give
\eqref{eq:Jacobi-matrix-diagonals}--\eqref{eq:Jacobi-matrix-edges}.
When both $r=2$ coordinates occur, the remaining coupling is
\[
 (M_\ell)_{2,h_\ell+1}=-(Z_\ell)_{2,h_\ell+1}
 =-\frac{\alpha\beta}{D^2}.
\]
All other off-diagonal entries vanish.
The preceding formula for the full matrix $M_\ell$ also gives
the endpoint cases, with correction
$2(n-1)/n$ at $r=2$.  For $\ell=1$, it gives $M_1=(1)$ when
$0<k<n$, and there is no normal block when $k=0,n$.

The matrix in brackets is real symmetric, and hence so is $M_\ell$.
The spectral decomposition and multiplicities follow from
Section~\ref{subsec:coefficient-modes}.
\end{proof}

\begin{proof}[Proof of Lemma~\ref{lem:comparison-spectrum}]
Write $h=h_\ell$.  For $1\le i\le m_\ell+2$, put $d=i-h-1$ and define
\[
 c_i\coloneqq(-1)^d\sqrt{\binom{2\ell}{\ell+d}\binom n{k-d}},\qquad
 (u_j)_i\coloneqq c_i\binom{i-1}{j}\quad(0\le j\le m_\ell+1).
\]
The matrix with columns $u_0,\ldots,u_{m_\ell+1}$ is lower triangular
with nonzero diagonal entries $c_1,\ldots,c_{m_\ell+2}$, so these
vectors form a basis.  For adjacent rows, the binomial coefficient
ratios give
\[
 (T_\ell)_{i,i+1}\frac{c_{i+1}}{c_i}=(\ell-d)(k-d),\qquad
 (T_\ell)_{i,i-1}\frac{c_{i-1}}{c_i}=(\ell+d)(n-k+d).
\]
The sum of these two coefficients is $\ell n+\lambda d+2d^2$;
adding the diagonal entry of $T_\ell$ gives $\ell(n+\ell+1)$.
Set $x=i-1=d+h$.  Since $h=\min(n-k,\ell)$,
\[
 (\ell+d)(n-k+d)=x(x+\ell+n-k-2h).
\]
We regard $\binom{x}{j}$ as a polynomial in $x$ and set
$\binom{x}{-1}=0$.  The identities
\[
 \binom{x+1}{j}-\binom{x}{j}=\binom{x}{j-1},\qquad
 x\left(\binom{x-1}{j}-\binom{x}{j}\right)=-j\binom{x}{j}
\]
therefore give
\begin{equation}
\label{eq:comparison-difference-operator}
\frac{(T_\ell u_j)_i}{c_i}
=\ell(n+\ell+1)\binom{x}{j}
+(\ell+h-x)(k+h-x)\binom{x}{j-1}
-j(x+\ell+n-k-2h)\binom{x}{j}.
\end{equation}
At the first and last rows, the coefficients of the missing backward
and forward terms vanish, respectively, so the same formula applies.
For $j\ge1$, the identity
$(x-j+1)\binom{x}{j-1}=j\binom{x}{j}$ yields
\[
(\ell+h-x)(k+h-x)\binom{x}{j-1}
=j(x+j-1-\ell-k-2h)\binom{x}{j}
+(\ell+h-j+1)(k+h-j+1)\binom{x}{j-1}.
\]
Substitution into \eqref{eq:comparison-difference-operator} gives
\[
 T_\ell u_j=(\ell-j)(n+\ell-j+1)u_j
 +(\ell+h-j+1)(k+h-j+1)u_{j-1},\qquad 0\le j\le m_\ell+1,
\]
where $u_{-1}=0$; the case $j=0$ follows directly from
\eqref{eq:comparison-difference-operator}.
Thus $T_\ell$ is similar to an upper bidiagonal matrix, whose diagonal entries are
$(\ell-j)(n+\ell-j+1)$.  Successive entries differ by
$n+2\ell-2j>0$ for $0\le j\le m_\ell$, since
$m_\ell+1\le\min(n,2\ell)$.  This proves the stated simple spectrum.
\end{proof}

\begin{proof}[Proof of Lemma~\ref{lem:matrix-congruence}]
Write $t=\ell(\ell+1)$, $D=D_{\lambda,n}$, $a=a_{\lambda,n}$ and
$b=b_{\lambda,n}$, so $D=a^2+b^2$ and $\lambda=a^2-b^2$.
For $x\in\R^{m_\ell+2}$, index its coordinates by
$-h_\ell\le d\le\min(k,\ell)$ and set $x_d=0$ outside this range.
Put
\[
 u\coloneqq x_0-\frac{ax_{-1}+bx_1}{\sqrt t},\qquad
 v\coloneqq\frac{ax_{-1}-bx_1}{\sqrt D},\qquad
 z\coloneqq\frac{bx_{-1}+ax_1}{\sqrt D}.
\]
For $0<k<n$, the change from $(x_{-1},x_1)$ to $(v,z)$ is orthogonal.
At $k=0$ or $k=n$, $z=0$ and $v=x_{-1}$ or $v=-x_1$, respectively.
\begingroup
The inverse relations are
\[
 x_{-1}=\frac{av+bz}{\sqrt D},\qquad
 x_1=\frac{-bv+az}{\sqrt D}.
\]
\endgroup
The terms of $x^{\mathrm T}T_\ell x$ involving only $x_0,x_{-1},x_1$
satisfy
\begin{equation}
\label{eq:congruence-central-form}
 \begin{aligned}
 &t x_0^2-2\sqrt t\,x_0(ax_{-1}+bx_1)
   +(t+\lambda-2)x_{-1}^2+(t-\lambda-2)x_1^2\\
 &\qquad=t u^2+(t-2)(x_{-1}^2+x_1^2)-(bx_{-1}+ax_1)^2=t u^2+(t-2)v^2+(t-D-2)z^2.
 \end{aligned}
\end{equation}
For $\ell=1$, there are no other coordinates.  This identity gives
$\operatorname{diag}(2,0,-D)$ when $0<k<n$, since $M_1=(1)$, and
$\operatorname{diag}(2,0)$ when $k=0,n$.

Now suppose $\ell\ge2$.  The only terms coupling $x_{-1},x_1$ to
the remaining coordinates are
\begin{equation}
\label{eq:congruence-adjacent-form}
-2\sqrt{t-2}\bigl(a_{\lambda-2,n}x_{-1}x_{-2}
+b_{\lambda+2,n}x_1x_2\bigr)
=-2\sqrt{\frac{t-2}{D}}\left[
v(\alpha x_{-2}-\beta x_2)
+z\bigl(ba_{\lambda-2,n}x_{-2}+ab_{\lambda+2,n}x_2\bigr)\right].
\end{equation}
Define
\[
 \widehat v\coloneqq v-\frac{\alpha x_{-2}-\beta x_2}{\sqrt{D(t-2)}},\qquad
 y\coloneqq\bigl(z,(x_{-r})_{r=2}^{h_\ell},
                   (x_r)_{r=2}^{\min(k,\ell)}\bigr)^{\mathrm T},
\]
omitting $z$ when $k=0,n$, so that $y\in\R^{m_\ell}$.
Completing the square in $v$ leaves the correction
\[
 -\frac{(\alpha x_{-2}-\beta x_2)^2}{D}
 =-\frac{\alpha^2}{D}x_{-2}^2
   +\frac{2\alpha\beta}{D}x_{-2}x_2
   -\frac{\beta^2}{D}x_2^2.
\]
This gives the two $r=2$ diagonal corrections and the cross-branch
entry of $-DM_\ell$.  The diagonal entry corresponding to $z$
and its couplings to $x_{-2},x_2$ are given by \eqref{eq:congruence-central-form} and
\eqref{eq:congruence-adjacent-form}.
On the lower and upper branches, the unchanged diagonal entries of
$T_\ell$ are $t-r(2r-\lambda)$ and $t-r(2r+\lambda)$, respectively.
Its unchanged within-branch entries are also precisely those of
$-DM_\ell$, by \eqref{eq:comparison-matrix-entries} and
\eqref{eq:Jacobi-matrix-edges}--\eqref{eq:Jacobi-matrix-endpoints}.
All other entries vanish.  Comparing with
\eqref{eq:Jacobi-matrix-diagonals}--\eqref{eq:Jacobi-matrix-endpoints} gives
\begin{equation}
\label{eq:congruence-square-completion}
 x^{\mathrm T}T_\ell x=t u^2+(t-2)\widehat v^{\,2}
                         -D\,y^{\mathrm T}M_\ell y.
\end{equation}
For $k=0,n$, the same calculation gives the stated single-branch
matrix: $D=n$ and the $r=2$ correction is
$-\alpha^2/n=-2(n-1)$ or $-\beta^2/n=-2(n-1)$, respectively.

The change of coordinates from $x$ to $(u,\widehat v,y)$ is invertible:
the change from $(x_{-1},x_1)$ to $(v,z)$ is orthogonal
(a sign change when $k=0,n$), and the subtractions defining $u$
and $\widehat v$ are triangular changes.
If $P$ is the matrix of the inverse change of coordinates, then
\eqref{eq:congruence-square-completion} gives
\eqref{eq:matrix-congruence}.
For $\ell=1$, use $(u,v,z)$, omitting $z$ when $k=0,n$.
\end{proof}

\begingroup
\makeatletter
\@startsection{section}{1}{\z@}%
  {18\p@ \@plus 6\p@ \@minus 3\p@}{1sp}%
  {\normalsize\bfseries\boldmath}*{Funding}
\makeatother
\setlength{\emergencystretch}{1.5em}
\noindent This work was supported by the National Natural Science Foundation of China
[grant numbers 12301068, 12571057]; the Chinese Academy of Sciences
[grant number YSBR-001]; the Xiaomi Foundation through the Xiaomi Young
Scholar Program; and the Fundamental Research Funds for the Central
Universities [grant number WK0010000081].
\par
\endgroup

\section*{Acknowledgments}

The authors thank Prof. Qing Chen, Prof. Xiaoxiang Jiao and Prof. Xiaowei Xu.

\Needspace{9\baselineskip}
\section*{Declarations}
\begingroup
\setlength{\parskip}{0pt}
\noindent\textit{Conflict of Interest.} The authors declare that they have no conflicts of interest.\par
\noindent\textit{Author Contributions.} Both authors contributed equally and are listed alphabetically.\par
\noindent\textit{Data Availability.} No datasets were generated or analyzed for this theoretical study.\par
\noindent\textit{Use of AI.} The authors used ChatGPT (OpenAI) to improve the English language and readability of this manuscript. The authors take full responsibility for its content.\par
\endgroup


\begingroup
\makeatletter
\@secpenalty=0\relax
\makeatother
\setlength{\bibsep}{0pt}

\endgroup

\end{document}